\documentclass{article}

\usepackage{graphicx}
\usepackage{amsfonts, amsmath, amssymb, amsthm, mathtools}
\usepackage{authblk}

\usepackage{xcolor}
\usepackage{xspace}

\usepackage{hyperref}
\usepackage{cleveref}

\usepackage[textwidth=1.5in]{todonotes}
\usepackage[margin=1in]{geometry}

\usepackage{thmtools}
\usepackage{thm-restate}

\usepackage{cancel}
\newcommand{\Cancel}[2][black]{{\color{#1}\cancel{\color{black}#2}}}


\theoremstyle{definition}

\newtheorem{environment}{Environment}[section]

\newtheorem{lemma}[environment]{Lemma}
\crefname{lemma}{lemma}{lemmata}
\crefformat{lemma}{#2Lemma~#1#3}
\Crefformat{lemma}{#2Lemma~#1#3}

\newtheorem*{lemma*}{Lemma}
\crefname{lemma*}{lemma}{lemmata}
\crefformat{lemma*}{#2Lemma~#1#3}
\Crefformat{lemma*}{#2Lemma~#1#3}

\newtheorem{proposition}[environment]{Proposition}
\crefname{proposition}{proposition}{propositions}
\crefformat{proposition}{#2Proposition~#1#3}
\Crefformat{proposition}{#2Proposition~#1#3}

\newtheorem{corollary}[environment]{Corollary}
\crefname{corollary}{corollary}{corollaries}
\crefformat{corollary}{#2Corollary~#1#3}
\Crefformat{corollary}{#2Corollary~#1#3}

\newtheorem{theorem}[environment]{Theorem}
\crefname{theorem}{theorem}{Theorems}
\crefformat{theorem}{#2Theorem~#1#3}
\Crefformat{theorem}{#2Theorem~#1#3}

\newtheorem*{theorem*}{Theorem}
\crefname{theorem*}{theorem}{Theorems}
\crefformat{theorem*}{#2Theorem~#1#3}
\Crefformat{theorem*}{#2Theorem~#1#3}

\newtheorem{conjecture}[environment]{Conjecture}
\crefname{conjecture}{conjecture}{Conjectures}
\crefformat{conjecture}{#2Conjecture~#1#3}
\Crefformat{conjecture}{#2Conjecture~#1#3}

\newtheorem*{hypothesis*}{Hypothesis}
\crefname{hypothesis*}{conjecture}{Conjectures}
\crefformat{hypothesis*}{#2Conjecture~#1#3}
\Crefformat{hypothesis*}{#2Conjecture~#1#3}

\crefname{observation}{observation}{Observations}
\crefformat{observation}{#2Observation~#1#3}
\Crefformat{observation}{#2Observation~#1#3}

\crefname{example}{example}{examples}
\crefformat{example}{#2Example~#1#3}
\Crefformat{example}{#2Example~#1#3}

\crefname{remark}{remark}{remarks}
\crefformat{remark}{#2Remark~#1#3}
\Crefformat{remark}{#2Remark~#1#3}

\crefname{figure}{figure}{figures}
\crefformat{figure}{#2Figure~#1#3}
\Crefformat{figure}{#2Figure~#1#3}

\crefname{equation}{equation}{Equations}
\crefformat{equation}{#2#1#3}
\Crefformat{equation}{#2#1#3}

\crefname{chapter}{chapter}{chapters}
\crefformat{chapter}{#2Chapter~#1#3}
\Crefformat{chapter}{#2Chapter~#1#3}

\crefname{section}{section}{sections}
\crefformat{section}{#2Section~#1#3}
\Crefformat{section}{#2Section~#1#3}

\crefname{algorithm}{algorithm}{algorithms}
\crefformat{algorithm}{#2Algorithm~#1#3}
\Crefformat{algorithm}{#2Algorithm~#1#3}

\crefname{notation}{notation}{notations}
\crefformat{notation}{#2Notation~#1#3}
\Crefformat{notation}{#2Notation~#1#3}

\crefname{question}{question}{questions}
\crefformat{question}{#2Question~#1#3}
\Crefformat{question}{#2Question~#1#3}

\crefname{problem}{problem}{problem}
\crefformat{problem}{#2problem~#1#3}
\Crefformat{problem}{#2problem~#1#3}

\newtheorem{claim}{Claim}[environment]
\crefname{claim}{claim}{claims}
\crefformat{claim}{#2Claim~#1#3}
\Crefformat{claim}{#2Claim~#1#3}

\crefname{definition}{definition}{definitions}
\crefformat{definition}{#2Definition~#1#3}
\Crefformat{definition}{#2Definition~#1#3}

\definecolor{CornflowerBlue}{rgb}{0.39, 0.58, 0.93}
\definecolor{Magenta}{rgb}{0.50, 0.0, 0.50}
\definecolor{AppleGreen}{rgb}{0.55, 0.71, 0.0}
\definecolor{AO}{rgb}{0.0, 0.5, 0.0}

\definecolor{DeepCarrotOrange}{rgb}{0.91, 0.41, 0.17}
\definecolor{BananaYellow}{rgb}{1.0, 0.88, 0.21}

\hypersetup{
colorlinks=true,
linkcolor=AO!65!black,
citecolor=AO!65!black,
urlcolor=AppleGreen!65!black,
bookmarksopen=true,
bookmarksnumbered,
bookmarksopenlevel=2,
bookmarksdepth=3
}

\title{Coloring Graphs With No Totally Odd Clique Immersion}
\author[1]{Caleb McFarland\footnote{Supported in part by the National Science Foundation under Grant No. DMS- 2452111.}
}
\affil[1]{School of Mathematics, Georgia Institute of Technology}
\date{August 2025; revised July 2026}

\begin{document}

\maketitle

\begin{abstract}
    We prove that graphs that do not contain a totally odd immersion of $K_t$ are $\mathcal{O}(t)$-colorable. In particular, we show that any graph with no totally odd immersion of $K_t$ is the union of a bipartite graph and a graph which forbids an immersion of $K_{\mathcal{O}(t)}$. Our results are algorithmic, and we give a fixed-parameter tractable algorithm (in $t$) to find such a decomposition.
\end{abstract}

\section{Introduction}
Hadwiger conjectured that every graph without a $K_t$-minor is $(t-1)$-colorable \cite{hadwiger1943klassifikation}. This would be a vast generalization of the famous Four-Color Theorem which to date only has a computer proof \cite{appel1989every, Robertson1997FourColorTheorem}. Despite a huge amount of recent progress on this problem \cite{delcourt2025reducing, kostochka1984lower, norin2023breaking, postle2020even, thomason1984extremal}, a linear bound remains out of reach. So, in an effort to understand the limits of Hadwiger's Conjecture, several other notions of graph containment have been considered. In particular, research in the area focuses on minors~\cite{delcourt2025reducing, hadwiger1943klassifikation, kostochka1984lower, norin2023breaking, thomason1984extremal}, subdivisions~\cite{bollobas1998proof, Catlin1979hajos, Hajos1961uber, komlos1996topological}, immersions~\cite{Abu2003graph, Devos2014Minimum, Gauthier2019Forcing, Lescure1988problem}, and their ``odd" variants~\cite{Churchley2017Odd, Jensen1995GraphColoring, Kawarabayashi2013Totally, Steiner2022Asymptotic}. The odd variants were introduced in an effort to capture the parity of cycles; a graph is bipartite if and only if it forbids $K_3$ as an odd minor/subdivision/immersion. (These three notions are equivalent for $K_3$ but not for larger cliques.)

It has been a general trend in this area that, after sometimes a great deal of work, it is discovered that coloring in the odd case reduces to coloring in the standard case up to multiplicative factors \cite{Kawarabayashi2013Totally, Steiner2022Asymptotic}. See \cref{fig:asymptotics_summary} for a summary of asymptotic coloring bounds in each case. Here, we show that the same trend holds for immersions.

A graph $G$ contains an \emph{immersion} of $K_t$ if there is a way of mapping the vertices of $K_t$ into distinct vertices of $G$ and the edges of $K_t$ into edge-disjoint trails in $G$ between the corresponding vertices of $G$. An immersion is said to be \emph{totally odd} if every trail has odd length. Several different definitions of immersions and ``odd'' immersions appear in the literature \cite{Churchley2017Odd, Jimenez2024totally}. The version we consider is natural from the perspective of edge-connectivity; its roots can be traced back to results of Lov\'{a}sz~\cite{LovaszBook79} and Mader~\cite{MaderMaintainConn} about splitting off while maintaining edge-connectivity. We refer the reader to \cref{section:Preliminaries} for formal definitions related to immersions. We are now ready to state our main result.

\begin{theorem}\label{thm:UnionOfBipAndNoImmersion}
    For every positive integer $t$, every graph that does not contain a totally odd immersion of $K_t$ is the union of a bipartite graph and a graph which does not contain $K_{98{\small,}000t + 4{\small,}410{\small,}071}$ as an immersion.
\end{theorem}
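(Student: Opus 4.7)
The plan is to produce the decomposition $G = B \cup H$ by taking $B$ to be a \emph{locally maximum} bipartite subgraph of $G$: with respect to a bipartition $(A_0, A_1)$, no single-vertex side swap strictly increases $|E(B)|$. Setting $H := G - E(B)$, this immediately forces $\deg_H(v) \le \deg_B(v)$ at every vertex, so each vertex has a large reservoir of cross edges available for ``parity detours''. Such a bipartition is also computable in polynomial time, which should fit the algorithmic aspirations stated in the abstract. Write $r := 98{,}000\,t + 4{,}410{,}071$; the substantive claim is that, for this choice, $H$ does not admit a $K_r$-immersion.

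I would argue the contrapositive: given a $K_r$-immersion $\mathcal{I}$ in $H$, construct a totally odd $K_t$-immersion of $G$, contradicting the hypothesis. Since edges of $H$ lie entirely inside $A_0$ or entirely inside $A_1$, and an immersion of $K_r$ is connected, the branch vertices $U = \{u_1, \dots, u_r\}$ of $\mathcal{I}$ all lie in one part; say $U \subseteq A_0$. Label each trail $T_{ij}$ of $\mathcal{I}$ as \emph{good} if $\operatorname{length}(T_{ij})$ is odd and \emph{bad} otherwise. For $u_i, u_j \in A_0$, any totally odd trail between them in $G$ must use an odd number of monochromatic (i.e., $H$-) edges, so good trails are already usable and bad trails each require a single unit of parity correction.

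The main combinatorial step is to simultaneously correct the bad trails by splicing in short detours through $A_1$. A prototypical detour replaces a sub-trail of a bad $T_{ij}$ by a routing $u \to x \to \cdots \to y \to w$, where $ux, wy \in E(B)$ are cross edges, $u, w$ are on $T_{ij}$, $x, y \in A_1$, and $x$-to-$y$ is a path in $G[A_1]$ of prescribed parity chosen so that the spliced trail has odd total length. Since $\deg_B(v) \ge \deg_H(v) \ge r-1$ at each branch vertex, there is a large supply of fresh cross edges available to feed the detours. After passing to a sub-immersion on $t$ branch vertices (chosen so that every vertex spends only a small fraction of its cross-edge reservoir), one constructs $\binom{t}{2}$ edge-disjoint detours and obtains a totally odd $K_t$-immersion in $G$.

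The main obstacle is the edge-disjointness of the detours: the cross edges and the $A_1$-paths used for distinct bad pairs must avoid each other and the surviving $H$-trails. I expect this to be handled in two pieces. First, an edge-disjoint odd/even paths lemma applied inside $G[A_1]$ routes the $\Theta(t^2)$ pairs of detour endpoints with prescribed parity; this is where I would invoke a structural or immersion-based tool from \cref{section:Preliminaries}. Second, a pigeonhole (or random subset) argument chosen from the $r$ branch vertices controls how often any single vertex or cross edge is reused, so that the local-max reservoir at every branch vertex is sufficient. The explicit constants $98{,}000$ and $4{,}410{,}071$ in $r$ presumably absorb the multiplicative slack lost in these two steps together with any start-up constants in the auxiliary lemmas.
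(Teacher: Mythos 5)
Your proposal shares the broad contrapositive strategy---assume a large clique immersion survives in $H = G - E(B)$ and try to produce a totally odd $K_t$-immersion---but it commits to reaching that contradiction by \emph{direct construction}, whereas the paper's proof rests on an Erd\"{o}s-P\'{o}sa dichotomy together with a growth argument. The paper takes $F$ to be a \emph{globally} maximum bipartite edge set, extracts a totally even sub-immersion from any surviving clique immersion (\cref{lem:MakingTotallyEven}), and then applies an Erd\"{o}s-P\'{o}sa theorem for odd circuits (\cref{cor:OddCircuitsEP}): either there are many edge-disjoint odd circuits through the branch vertices, in which case \cref{thm:makingImmersionOdd} yields a totally odd $K_t$-immersion contradicting the hypothesis, or a small edge set separates off a bipartite piece near the branch vertices (\cref{prop:ImmersionGivesEdgeCut}), in which case $F$ could have been strictly enlarged, contradicting maximality (\cref{prop:GrowingBipartite}). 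Your observation that all branch vertices of $\mathcal{I}$ lie on one side $A_0$ is correct and clean, and the locally-maximum degree bound $\deg_H(v) \le \deg_B(v)$ is a reasonable starting point, but the argument diverges from there.

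The genuine gap is the \emph{existence} of the odd detours, not merely their edge-disjointness. Any trail between two $A_0$-vertices that uses only cross edges alternates sides and therefore has even length; hence every parity-correcting detour must use a monochromatic edge, and those in $A_0$ compete with $\mathcal{I}$ while those in $A_1$ require $G[A_1]$ to contain odd walks reaching the relevant cross-edge neighborhoods. Local maximality of the bipartition $(A_0, A_1)$ does not prevent $G[A_1]$ from being bipartite or otherwise lacking usable odd structure, and the ``edge-disjoint odd/even paths lemma'' you invoke does not exist as a routing tool. The closest available result (\cref{lem:OddPathsEP}, \cref{cor:OddCircuitsEP}) is a packing--covering dichotomy, and it is precisely the \emph{covering} outcome that your argument cannot absorb: you are committed to reaching a totally odd immersion and have no mechanism to improve the bipartition when the required odd circuits fail to be present. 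That fallback---growing the bipartite part when the odd structure is too sparse to pack---is the key missing idea, and it is why the paper works with a globally maximum $F$ rather than a locally maximum one.
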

\noindent
We note that for coloring it suffices to consider simple graphs, but the graphs in \cref{thm:UnionOfBipAndNoImmersion} need not be simple.

It is known that graphs that do not contain $K_t$ as an immersion are $\mathcal{O}(t)$-colorable \cite{Devos2014Minimum}. The best bound of this form is due to Gauthier, Le, and Wollan~\cite{Gauthier2019Forcing}, who prove that every graph that does not contain $K_t$ as an immersion is $\lceil3.54t + 3\rceil$-colorable. Together with \cref{thm:UnionOfBipAndNoImmersion}, this immediately implies the first linear bound on the chromatic number of graphs that do not contain a totally odd $K_t$-immersion. The previous best known bound was $\mathcal{O}(t^2)$ \cite{echeverria2025topological, Kawarabayashi2013Totally}.

\begin{restatable}{corollary}{MainColoringTheorem}\label{thm:MainColoringResult}
    For every positive integer $t$, every graph that does not contain a totally odd immersion of $K_t$ is $(700{\small,}000t + 32{\small,}000{\small,}000)$-colorable.
\end{restatable}

Inspired by well-known conjectures about coloring graphs with no immersion of $K_t$, Churchley~\cite{Churchley2017Odd} conjectured that every graph that does not contain a totally odd immersion of $K_t$ is $(t-1)$-colorable. \cref{thm:MainColoringResult} resolves Churchley's conjecture up to a constant factor.

Churchley's conjecture has also been posed for totally odd strong immersions by Jim\'{e}nez, Quiroz, and Caro~\cite{Jimenez2024totally}. An immersion of $K_t$ is called \emph{strong} if the edges of $K_t$ correspond to edge-disjoint paths in $G$ (as opposed to trails), and the vertices of $K_t$ are mapped to vertices in $G$ which do not appear as internal vertices of these paths. The definition we give for immersions is sometimes called \emph{weak} immersions. Echeverr\'{i}a, Jim\'{e}nez, Mishra, Pastine, Quiroz, and Y\'{e}pez~\cite{echeverria2025topological} conjectured that every graph that does not contain a totally odd strong immersion of $K_t$ is $\mathcal{O}(t)$-colorable. Our techniques do not extend to strong immersions, and so we leave this conjecture as an open problem. We note that strong immersions, despite being more similar to subdivisions, are often less natural from a structural perspective. For instance, Robertson and Seymour~\cite{robertson2010graph} were able to extend their proof of Wagner's conjecture to show that graphs are well-quasi-ordered under the (weak) immersion relation. However, whether the same is true for strong immersions remains open \cite{liu2020recent, liu2023well}. This, along with their more natural description in terms of the ``splitting off'' operation, suggest that perhaps weak immersions are more analogous to minors than strong immersions.

The previous best bound on the chromatic number of graphs that do not contain a totally odd $K_t$-immersion was $79t^2/4$, given by Kawarabayashi~\cite{Kawarabayashi2013Totally} more generally for the case of totally odd subdivisions. It seems that an $\mathcal{O}(t^2)$ bound is a natural barrier to many approaches, as such a bound is implied by results of Kawarabayashi~\cite{Kawarabayashi2013Totally}, Churchley~\cite{Churchley2017Odd}, and McCarty, Wollan, and the author~\cite{McCartyMW2026Structure}. The main difficulty we overcome is constructing a totally odd $K_t$ immersion without working directly with a ``flower''. We refer the reader to \cref{subsection:comparison} for more details.

Finally, we mention that our proofs are algorithmic. That is, we obtain the following theorem.

 \begin{restatable}{theorem}{DecompositionAlgorithmThm}\label{thm:DecompositionAlgorithm}
     There exists an algorithm that takes as input a multigraph $G$ and a positive integer $t$ and finds either a totally odd $K_t$-immersion in $G$, or a bipartite subgraph $H$ of $G$ such that $G \setminus E(H)$ has no $K_{98{\small,}000t + 4{\small,}410{\small,}071}$-immersion. Furthermore, this algorithm runs in $\mathcal{O}_t(|V(G)|^4|E(G)| + |E(G)|^4)$ time.
 \end{restatable}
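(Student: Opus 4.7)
The plan is to unwind the proof of \cref{thm:UnionOfBipAndNoImmersion} into a constructive procedure and then bound the cost of each structural step. The algorithm maintains an edge set $F \subseteq E(G)$ with $G[F]$ bipartite, initially $F = \emptyset$. At each iteration, it searches for a $K_s$-immersion in $G \setminus F$, where $s = 98{,}000t + 4{,}410{,}071$. If no such immersion is found, we output the decomposition $(F, E(G) \setminus F)$. Otherwise, we feed the immersion into the structural argument of \cref{thm:UnionOfBipAndNoImmersion}, which must either (i) produce a totally odd $K_t$-immersion, which we return, or (ii) exhibit a modification of $F$ that keeps $G[F]$ bipartite while strictly improving a monovariant (e.g., the number of edges of $F$, or a lexicographic measure on the sequence of odd-cycle covers).

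The two principal subroutines are immersion detection and bipartite augmentation. For the first, detecting a $K_s$-immersion for fixed $s$ can be carried out in $f(s)\cdot \mathrm{poly}(|V|,|E|)$ time via the fixed-parameter algorithm for $H$-immersion (Grohe--Kawarabayashi--Marx--Wollan and successors); since $s = \Theta(t)$, this contributes the $\mathcal{O}_t(\cdot)$ factor. For the second, one checks bipartiteness and identifies odd cycles via breadth-first search in $\mathcal{O}(|V|+|E|)$, and more sophisticated operations such as shortest odd walks, $T$-joins, and Mader-style edge-disjoint trail packings — which typically arise in constructive proofs about odd immersions — are each implementable in $\mathcal{O}(\mathrm{poly}(|V|,|E|))$ time. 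Each iteration of the main loop thus costs at most $\mathcal{O}_t(|V|\cdot |E|)$ once immersion detection has been carried out, and a potential argument on the monovariant bounds the number of iterations by $\mathcal{O}(|V|^3)$ or similar, yielding the $\mathcal{O}_t(|V|^4|E|)$ term. The $\mathcal{O}_t(|E|^4)$ term is absorbed by a single invocation of the immersion-detection subroutine whose polynomial dependence on $|E|$ is quartic.

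The main obstacle is making the extraction step inside each iteration genuinely algorithmic: the proof of \cref{thm:UnionOfBipAndNoImmersion} likely produces a totally odd $K_t$-immersion by reasoning about parities of trails in the $K_s$-immersion relative to $G[F]$, and if no such immersion can be built, by exhibiting an odd-cycle-free augmentation of $F$. Turning this dichotomy into an efficient procedure requires that each existential claim in the proof be witnessed by a polynomial-time constructive analogue — typically shortest odd $T$-joins or augmenting trails — and that the monovariant used in the structural proof admit a uniform polynomial bound so that the outer loop halts in polynomially many rounds. Once these two ingredients are in place, the composition of the immersion oracle with the constructive extraction gives the claimed running time.
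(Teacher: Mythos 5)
Your high-level loop (maintain a bipartite $F$, search for a $K_s$-immersion in $G\setminus F$, either extract a totally odd $K_t$-immersion or augment $F$, and bound iterations by a monovariant) is precisely the skeleton of the paper's argument, which applies \cref{prop:GrowingBipartite} repeatedly with $|F|$ as the potential function. So the approach is essentially the same. However, the proposal has genuine gaps in both of the places where the actual work lies.

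First, the runtime bookkeeping is wrong. You attribute the $\mathcal{O}_t(|E|^4)$ term to ``a single invocation of the immersion-detection subroutine whose polynomial dependence on $|E|$ is quartic.'' That is not where it comes from. The paper's immersion-finding subroutine (\cref{cor:FindingImmersionAlg}) runs in $\mathcal{O}_t(|V|^4+|E|)$; summed over at most $|E|$ iterations this gives the $\mathcal{O}_t(|V|^4|E|)$ term. The $\mathcal{O}_t(|E|^4)$ term instead comes from running the algorithmic Erd\H{o}s--P\'osa step (\cref{cor:OddCircuitsEP}, implemented via \cref{lem:OddPathsEP} on the line graph of a $1$-subdivision, which blows up to $\mathcal{O}_k(|E|^3)$) up to $|E|$ times. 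Relatedly, your per-iteration estimate of $\mathcal{O}_t(|V|\cdot|E|)$ for the augmentation step is too low: the line-graph reduction makes the underlying graph have $\Theta(|E|)$ vertices and $\Theta(|E|^2)$ edges, so the odd-$A$-path subroutine costs cubic in $|E|$, not near-linear. Your iteration bound of ``$\mathcal{O}(|V|^3)$ or similar'' is also not justified; the clean bound is $|E|$, since $|F|$ strictly increases and is at most $|E|$.

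Second, you explicitly defer the core difficulty: ``the main obstacle is making the extraction step inside each iteration genuinely algorithmic'' and you propose it ``requires that each existential claim in the proof be witnessed by a polynomial-time constructive analogue --- typically shortest odd $T$-joins or augmenting trails.'' This is the part the paper actually has to do, and the tools are not $T$-joins: the constructive engine is (i) a procedure for finding (not just testing) a $K_s$-immersion from the Grohe--Kawarabayashi--Marx--Wollan tester (which requires the split-off/delete bookkeeping of \cref{cor:FindingImmersionAlg}), (ii) an algorithmic version of the Geelen--Gerards--Reed--Seymour--Vetta packing/covering of odd $A$-paths applied to the line-graph construction, and (iii) the explicit extremal algorithms inside \cref{lem:MakingTotallyEven} and \cref{thm:makingImmersionOdd}. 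Without addressing how to \emph{find} the immersion (the FPT result as usually stated is a decision procedure) and how to implement the odd-circuit Erd\H{o}s--P\'osa constructively, the proof does not close; acknowledging the gap does not fill it.
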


The above can be used, for instance, to give an algorithm for coloring a graph $G$ with at most $2\chi(G) + \mathcal{O}(t)$ colors if $G$ has no totally odd $K_t$-immersion by reducing to a result of Kawarabayashi and Kobayashi~\cite{kawarabayashi2012list}. This recovers an algorithm of Kawarabayashi, known more generally for graphs without a totally odd subdivision~\cite{Kawarabayashi2013Totally}.

\begin{figure}
    \centering
    \begin{tabular}{c|c|c|c|c|}
         & \multicolumn{2}{|c|}{standard} & \multicolumn{2}{c|}{odd}\\
         & Lower & Upper & Lower & Upper \\
         \hline
         subdivisions & $\Omega(t^2/\log t)$ & $\mathcal{O}(t^2)$ & $\Omega(t^2/\log t)$ & $\mathcal{O}(t^2)$\\
        \hline
        minors & $t-1$ & $\mathcal{O}(t\log\log t)$ & $(1.5 - o(1))t$ & $\mathcal{O}(t\log\log t)$\\
        \hline
        immersions & $t-1$ & $\mathcal{O}(t)$ & $t-1$ & \Cancel[red]{$\mathcal{O}(t^2)$}\textcolor{red}{$\mathcal{O}(t)$}
    \end{tabular}
    \caption{Lower and upper bounds for the maximum chromatic number of graphs without a (standard/odd) $K_t$ subdivision/minor/immersion. Our improvement is noted in red.}
    \label{fig:asymptotics_summary}
\end{figure}

\subsection{Related work}

To put our results in context we now give an overview of related work on coloring in (odd) \linebreak
minor/immersion/subdivision free graphs. Our results are motivated by the following well-known conjecture of Abu-Khzam and Langston~\cite{Abu2003graph} (also given by Lescure and Meyniel~\cite{Lescure1988problem} for strong immersions).

\begin{conjecture}[{\cite{Abu2003graph}}]\label{conj:ImmersionHadwigers}
    For every positive integer $t$, every graph that does not contain a $K_t$-immersion is $(t-1)$-colorable.
\end{conjecture}

\cref{conj:ImmersionHadwigers} is known for $t \leq 7$ due to work of Lescure and Meyniel~\cite{Lescure1988problem} and DeVos, Kawarabayashi, Mohar, and Okamura~\cite{DeVos2010Immersing}. \cref{conj:ImmersionHadwigers} is also known up to a constant factor, as first shown by DeVos, Dvo{\v{r}}{\'a}k, Fox, McDonald, Mohar, Bojan, and Scheide~\cite{Devos2014Minimum}, who proved that graphs with no $K_t$-immersion are $200t$-colorable. The best known asymptotic bound is that of Gauthier, Le, and Wollan~\cite{Gauthier2019Forcing} given below.

\begin{theorem}[{\cite[Theorem~1.6]{Gauthier2019Forcing}}]\label{thm:coloringNoImmersion}
    For every positive integer $t$, every graph that does not contain a $K_t$-immersion is $\lceil3.54t + 3\rceil$-colorable.
\end{theorem}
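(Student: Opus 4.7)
The plan is to reduce the coloring bound to a degree-based forcing theorem for $K_t$-immersions, following the standard critical-graph template. Consider a minimum counterexample: a graph $G$ with no $K_t$-immersion and $\chi(G) > \lceil 3.54t + 3\rceil$, chosen to minimise $|V(G)|$ and then $|E(G)|$. Since forbidding a $K_t$-immersion is closed under taking subgraphs, minimality forces $G$ to be colour-critical with $\delta(G) \geq \lceil 3.54t + 3 \rceil$. It therefore suffices to prove the extremal statement: every graph with minimum degree at least $\lceil 3.54t + 3\rceil - 1$ contains $K_t$ as an immersion.

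To force a $K_t$-immersion from high minimum degree, I would follow the ``locate-then-lift'' template used in prior work such as DeVos--Dvo\v{r}\'ak--Fox--McDonald--Mohar--Scheide (which produces the $200t$ bound) and Dvo\v{r}\'ak--Yepremyan. First I would locate a candidate set $S \subseteq V(G)$ of $t$ vertices that has pairwise local edge-connectivity at least $t-1$ inside a carefully chosen dense subgraph. By Menger's theorem this gives, for each pair $\{u,v\} \subseteq S$, a collection of $t-1$ edge-disjoint $uv$-paths. I would then invoke Mader's splitting-off (edge-lifting) theorem to iteratively remove the vertices of $V(G) \setminus S$, pairing up their incident edges and routing them through $S$ while preserving the pairwise edge-connectivity. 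At the end only the vertices of $S$ carry edges, and the $\binom{t}{2}$ trails witnessing the $K_t$-immersion are read off directly.

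Concretely, I expect the argument to proceed by first peeling: repeatedly delete vertices of degree strictly below the threshold (there are none by assumption, so we keep all of $G$); then an averaging/extraction step identifies $S$. For the extraction, the plan is a weighted count that assigns to each vertex $v \in V(G) \setminus S$ a ``connectivity budget'' proportional to $\deg(v)$, then argues via a Hall-type matching (or an LP relaxation) that the total budget suffices to supply $t-1$ internally-disjoint paths between every pair in $S$. Finally the lifting phase uses these paths as a template and Mader's theorem to guarantee that each lift step preserves the relevant local edge-connectivities.

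The main obstacle is squeezing the constant from the earlier $200$ (or the intermediate $\approx 11$ from Dvo\v{r}\'ak--Yepremyan) down to $3.54$. Earlier arguments are wasteful either in how they extract $S$ (discarding too many vertices whose high degree could have supplied routing capacity) or in how edges of $V(G) \setminus S$ are used during lifting (each lift is charged to only one pair in $S$ rather than being amortised across several). The critical step will be a tight amortised analysis that charges each edge of $G$ to the ``right'' pair(s) of $S$ so that a linear-in-$t$ minimum degree, with coefficient $3.54$, already supplies $\binom{t}{2}$ worth of edge-disjoint trails. Verifying that this amortisation closes with a coefficient no larger than $3.54$ is where I expect the bulk of the technical work to sit.
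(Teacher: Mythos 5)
This statement is not proved in the paper at all --- it is cited directly from Gauthier, Le, and Wollan (\cite[Theorem~1.6]{Gauthier2019Forcing}) --- so there is no internal proof to compare your attempt against. Your proposal does, however, contain a genuine gap at its reduction step. The critical-graph argument correctly shows that a minimum counterexample $G$ is $(k+1)$-colour-critical with $\delta(G) \geq k$ where $k = \lceil 3.54t + 3\rceil$, but you then assert that it suffices to prove ``minimum degree $\geq k$ forces a $K_t$-immersion.'' The strongest known theorem of that shape is \cref{thm:MinDegImmersion}, which is Theorem~1.4 of the very same Gauthier--Le--Wollan paper and requires minimum degree $7t+7$. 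If the $3.54t$ colouring bound followed from minimum degree alone, it would amount to a near factor-two improvement over their own Theorem~1.4 --- something they do not claim and which remains open. The missing ingredient is that a $(k+1)$-colour-critical graph carries much more structure than $\delta \geq k$: in particular it is $k$-\emph{edge-connected} (a classical fact, proved by recolouring across any putative small edge cut). Gauthier--Le--Wollan's colouring bound runs through a forcing lemma that exploits this edge-connectivity, not minimum degree alone; that is exactly why the colouring coefficient ($3.54$) can be roughly half the minimum-degree coefficient ($7$).

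The second half of your proposal --- the ``locate-then-lift'' template with local edge-connectivity, Menger, and Mader splitting-off --- is a fair impressionistic description of how immersion-forcing arguments are often organised, but at that level of generality it is indistinguishable from a sketch that would yield $200t$, or $11t$, or $7t$. All of the substance lies in how $S$ is chosen and in the amortised accounting you explicitly defer to ``the bulk of the technical work.'' To make your plan correct you would need to (i) strengthen the hypothesis emerging from the criticality reduction to include $k$-edge-connectivity, and (ii) actually prove a forcing lemma that converts that edge-connectivity and degree into a $K_t$-immersion with the claimed constant. Neither step is present, so the proposal does not constitute a proof and, as structured, cannot close with coefficient $3.54$.
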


In terms of recent progress, Fox, Pach, and Suk~\cite{fox2025immersions} showed that \cref{conj:ImmersionHadwigers} holds for graphs with at most $(1.64 - o(1))t$ vertices. This is in contrast to Hadwiger's Conjecture, where coloring small graphs seems like a bottleneck to improving current approaches~\cite{delcourt2025reducing}. Liu, Wang, and Yang~\cite{liu2022clique} showed that \cref{conj:ImmersionHadwigers} holds asymptotically for graphs without a fixed complete bipartite subgraph. In a slightly different direction, Kawarabayashi and Kobayashi~\cite{kawarabayashi2012list} gave an additive approximation algorithm for (list-)coloring graphs with no clique immersion. They also gave an Erd\H{o}s-P\'{o}sa property for $K_t$-immersions in $1.5t$-edge-connected graphs. Kakimura and Kawarabayashi~\cite{kakimura2016coloring} gave an FPT algorithm to decided whether a graph with no $K_t$-immersion is $(t-2)$-colorable. \cref{conj:ImmersionHadwigers} has also been studied under different coloring notions, such as clustered coloring~\cite{liu2023immersion} and nonrepetitive coloring~\cite{wollan2019nonrepetitive}.

\cref{conj:ImmersionHadwigers} has obvious similarities with the famous Hadwiger's Conjecture, which says that for every integer $t \geq 1$, every graph that does not contain a $K_{t}$-minor is $(t-1)$-colorable. It is still open whether graphs with no $K_t$-minor are $\mathcal{O}(t)$-colorable. There has been a long line of work improving the asymptotic bounds for Hadwiger's Conjecture \cite{kostochka1984lower, norin2023breaking, postle2020even, thomason1984extremal}. The best bound currently is that of Delcourt and Postle~\cite{delcourt2025reducing}, who proved that graphs with no $K_t$-minor are $\mathcal{O}(t\log\log t)$-colorable.

\cref{conj:ImmersionHadwigers} and Hadwiger's Conjecture, while incomparable, are both weakenings of a conjecture of Haj\'{o}s~\cite{Hajos1961uber}, who conjectured that for every positive integer $t$, every graph that does not contain a subdivision of $K_{t}$ is $(t-1)$-colorable. This was disproven by Catlin~\cite{Catlin1979hajos} for all $t \geq 7$. It is not hard to show that Haj\'{o}s conjecture holds for $t \leq 4$, but the case of $t \in \{5,6\}$ remains open (see \cite{he2020kelmans, xie20224} for recent progress). Bollob\'{a}s and Thomason~\cite{bollobas1998proof}, and independently Koml\'{o}s and Szemer\'{e}di~\cite{komlos1996topological}, showed that graphs that do not contain a $K_t$-subdivision are $\mathcal{O}(t^2)$-colorable. Erd\H{o}s and Fajtlowicz~\cite{erdHos1981conjecture} proved that Haj\'{o}s' conjecture is false for almost all graphs. In fact, there are graphs with no $K_t$-subdivision and chromatic number $\Omega(t^2/\log t)$.

For $t = 3$, both Hadwiger's Conjecture and \cref{conj:ImmersionHadwigers} are equivalent to the statement that trees are 2-colorable. There have been different attempts to strengthen both conjectures to give a more precise classification of the 2-colorable case. For Hadwiger's Conjecture, Gerards and Seymour~\cite{Jensen1995GraphColoring} proposed the following odd-minor variant.

\begin{conjecture}[{\cite{Jensen1995GraphColoring}}]\label{conj:oddHadwigers}
    For every positive integer $t$, every graph that does not contain $K_{t}$ as an odd-minor is $(t-1)$-colorable.
\end{conjecture}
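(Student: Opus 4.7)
The plan is to attack \cref{conj:oddHadwigers} by induction on $t$, taking a minimum counterexample $G$: a graph with no odd $K_t$-minor that fails to be $(t-1)$-colorable, chosen to minimize $|V(G)|$ and then $|E(G)|$. The base cases are classical: $t \leq 3$ reduces to the fact that graphs with no odd $K_3$-minor are bipartite and hence $2$-colorable, while $t = 4$ is a theorem of Guenin. So I would assume $t \geq 5$ throughout, and aim to derive a contradiction by exhibiting an odd $K_t$-minor in $G$.

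The first step is to extract strong structural properties of $G$ from its minimality. Vertex-criticality gives minimum degree at least $t - 1$, and a standard clique-cut gluing argument would show $G$ has no clique separator of size less than $t-1$. I would push this further to rule out every small separator whose removal allows a parity-respecting gluing of $(t-1)$-colorings of the pieces; the upshot should be that $G$ is ``odd-highly-connected'' in the sense relevant to odd minors. I would then aim to show that some vertex of nearly minimum degree has a neighborhood whose structure obstructs any odd $K_t$-minor once the vertex is contracted along an appropriate odd edge, yielding the desired contradiction. This parallels Mader's classical argument for Hadwiger's conjecture, but the parity constraints make the local analysis substantially more delicate.

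The second strand of the plan is to emulate the decomposition philosophy of \cref{thm:UnionOfBipAndNoImmersion} in the odd-minor setting. Concretely, I would try to establish that any graph with no odd $K_t$-minor admits a partition of its edge set into a bipartite graph $B$ and a graph $H$ with no $K_{c(t)}$-minor, for some $c(t)$ as close to $t$ as possible. Combined with an inductive application of the ordinary Hadwiger bound on $H$ and the trivial $2$-coloring of $B$, such a decomposition would give an upper bound on $\chi(G)$; the goal is to drive $c(t) \le t$ and then \emph{couple} the coloring of $H$ with one side of the bipartition of $B$ so that the combined coloring uses $t - 1$ colors rather than $2(t-1)$.

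The main obstacle, and where genuinely new ideas are required, is precisely this coupling step: a naive union of a Hadwiger-coloring of $H$ with a $2$-coloring of $B$ only gives a $2(t-1)$-coloring of $G$, well short of the target. To recover the exact $(t-1)$ bound one must exploit the fact that the edges of $B$ and $H$ come from the same host graph; in particular, the two sides of the bipartition of $B$ should be compatible with the branch-set structure of any ``near'' odd $K_t$-minor in $G$. I would approach this via a refinement of the Geelen--Gerards--Reed--Seymour--Vetta structure theorem for odd-minor-free graphs, strengthening its quantitative content in the highly connected pieces, and then use a Kempe-chain argument on the bipartite part to merge two color classes wherever the structural decomposition permits it. This final merging step is where I expect the real difficulty to lie, and it is the point at which any serious progress toward the exact $(t-1)$ bound must be made.
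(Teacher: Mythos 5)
The statement you are trying to prove is not a theorem of this paper; it is the Odd Hadwiger Conjecture of Gerards and Seymour, which the paper states purely as motivation and which is wide open. There is no ``paper's own proof'' to compare against. What you have written is a research programme, not a proof: every step that carries the real weight is left unresolved. The Mader-style local analysis in your first strand is not carried out and, as you yourself flag, the parity constraints make it ``substantially more delicate''; no one has made that local argument work for odd minors even asymptotically, which is why the best known bound for odd $K_t$-minor-free graphs is $\mathcal{O}(t \log\log t)$ via Steiner's reduction to ordinary Hadwiger combined with Delcourt--Postle, not $(t-1)$. Your second strand has the same character: a decomposition into a bipartite part plus a $K_{c(t)}$-minor-free part, even if it existed with $c(t)\le t$, would by your own accounting give $2(t-1)$ colors, and the ``coupling'' / Kempe-chain merging that is supposed to halve this is precisely the open core of the problem. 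You acknowledge this (``this is where I expect the real difficulty to lie''), which is honest, but it means the proposal has a genuine gap at exactly the point where a proof is needed.

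Two smaller issues worth correcting. First, the decomposition you propose to ``emulate'' from \cref{thm:UnionOfBipAndNoImmersion} lives in the immersion world and does not transfer: the bipartite-plus-excluded-immersion split in this paper is obtained via an Erd\H{o}s--P\'osa argument for odd circuits through a single auxiliary vertex, which has no analogue for minors, where the relevant objects are vertex-disjoint rather than edge-disjoint. The closest known result in the minor setting is Steiner's, and it produces a vertex-identification of disjoint bipartite induced subgraphs, not an edge partition, and yields a factor-$2$ loss that has not been removed. Second, your base-case attribution is off: the case $t=4$ of the odd Hadwiger conjecture (odd $K_4$-minor-free graphs are $3$-colorable) is due to Catlin, not Guenin; Guenin's theorem concerns the odd $K_5$ case in a different (Four-Color-Theorem-dependent, signed-graph) form.
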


A graph is an \emph{odd-minor} of another if it can be formed from a subgraph via contracting edge cuts. Odd-minors are important because they preserve the parity of cycles. \cref{conj:oddHadwigers} was recently shown to be false by K{\"u}hn, Sauermann, Steiner, and Wigderson~\cite{kuhn2025disproof}. They showed that there exist graphs that do not contain $K_t$ as an odd-minor and have chromatic number at least $(1.5 - o(1))t$. Churchley~\cite{Churchley2017Odd} conjectured the following for immersions.

\begin{conjecture}[{\cite{Churchley2017Odd}}]\label{conj:OddImmersionHadwigers}
    For every positive integer $t$, every graph that does not admit a totally odd immersion of $K_{t}$ is $(t-1)$-colorable.
\end{conjecture}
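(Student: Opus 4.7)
The plan is to attack \cref{conj:OddImmersionHadwigers} via a minimum counterexample argument combined with the structural decomposition developed in this paper. Suppose for contradiction that $G$ is a graph with $\chi(G) \geq t$ that does not admit a totally odd $K_t$-immersion, chosen to minimize $|V(G)|$. Then $G$ is $t$-critical, so $\delta(G) \geq t-1$ and $G$ is $2$-connected, with additional structure such as $(t-1)$-edge-connectivity for $t \geq 5$. The conjecture then reduces to showing that every such critical $G$ must contain a totally odd $K_t$-immersion, which gives the desired contradiction.

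My first step would be to sharpen \cref{thm:UnionOfBipAndNoImmersion}: decompose $G = B \cup H$ with $B$ bipartite and $H$ containing no $K_N$-immersion, and push $N$ all the way down to $t$ (or $t$ plus a small absolute constant). If such a sharp decomposition were available, the conjecture could be approached in two stages. First, apply (a sharpened version of) \cref{thm:coloringNoImmersion} to color $H$ with at most $t-1$ colors. Second, use $B$ as a reservoir for parity adjustments: fix a proper $2$-coloring of $B$, and note that replacing an edge $xy$ of a trail in $H$ by a path in $B$ between $x$ and $y$ (which exists only when $x$ and $y$ lie in the same class of $B$) flips the parity of that trail, since $B$-paths between same-class vertices have even length. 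The hope would be that the edges of $B$ permit simultaneously toggling all $\binom{t}{2}$ trails of a $K_t$-immersion in $H$ in an edge-disjoint manner to produce a totally odd $K_t$-immersion in $G$, contradicting the assumption on $G$.

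The main obstacle, which I do not expect to overcome with current techniques, is twofold. First, the constant in \cref{thm:UnionOfBipAndNoImmersion} would have to be driven essentially to $1$, whereas even the non-odd analogue \cref{conj:ImmersionHadwigers} remains open with best known bound $\lceil 3.54 t + 3 \rceil$ rather than $t-1$. Second, the parity-toggling step requires a Menger-type theorem for edge-disjoint trails with prescribed parities, a notoriously subtle matter intertwined with the Erd{\H o}s--P\'osa property for odd subdivisions. A full resolution of \cref{conj:OddImmersionHadwigers} therefore very likely requires fundamentally new structural insight specific to totally odd immersions---for instance, a parity-tracking reworking of the Gauthier--Le--Wollan forcing argument underlying \cref{thm:coloringNoImmersion}---rather than any black-box application of it after the bipartite-plus-immersion-free decomposition.
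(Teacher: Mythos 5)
\cref{conj:OddImmersionHadwigers} is an open conjecture due to Churchley, not a theorem proved in this paper. What the paper actually establishes is the weaker linear bound of \cref{thm:MainColoringResult}, obtained from the bipartite-plus-immersion-free decomposition of \cref{thm:UnionOfBipAndNoImmersion} combined with the product-coloring trick. You correctly recognize this, and your closing diagnosis---that a full resolution would require both driving the decomposition constant essentially down to $1$ (at least as hard as \cref{conj:ImmersionHadwigers}, itself open) and a parity-sensitive Menger-type or Erd\H{o}s--P\'osa-type theorem for trails---is sound and consistent with how the paper frames the state of the art.

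That said, the speculative two-stage plan you sketch has an internal contradiction independent of the missing ingredients you name. Once the decomposition $G = B \cup H$ is sharpened so that $H$ has no $K_t$-immersion (which is exactly the hypothesis you need for the first stage, coloring $H$ with $t-1$ colors), there is no $K_t$-immersion in $H$ whose $\binom{t}{2}$ branch trails you could then re-route through $B$ to fix parities; the object of the ``parity-toggling'' step does not exist under the hypothesis that makes the coloring step work. A workable argument would need to extract a $K_t$-immersion from $G$ itself, built jointly from the edges of $B$ and $H$, rather than toggle a pre-existing one in $H$. Separately, you never explain how the $(t-1)$-coloring of $H$ and the $2$-coloring of $B$ combine to give a $(t-1)$-coloring of $G$; the only mechanism appearing in the paper is the product coloring, which doubles the color count to $2(t-1)$, and your minimum-counterexample setup in the opening paragraph is never actually invoked to close that gap. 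None of this undermines your bottom-line assessment that the conjecture remains open, but as a proof sketch the two stages as written cannot both be active at once.
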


In both of the above conjectures, the $t=3$ case now correctly identifies 2-colorable graphs as those with no odd cycles. The conjectures also say something about dense graphs, as graphs forbidding an odd-minor or totally odd immersion need not be sparse, unlike graphs which forbid a minor \cite{kostochka1984lower, thomason1984extremal} or immersion \cite{Devos2014Minimum}. \cref{conj:OddImmersionHadwigers} is known for $t=4$ via a stronger result of Zang~\cite{Zang1998proof} and independently Thomassen~\cite{Thomassen2001totally} which replaces ``totally odd immersion'' with ``totally odd subdivision''. Jim\'{e}nez, Quiroz, and Caro~\cite{Jimenez2024totally} proved \cref{conj:OddImmersionHadwigers} for line graphs of multigraphs where every edge has the same multiplicity. Echeverr\'{i}a, Jim\'{e}nez, Mishra, Quiroz, and Y\'{e}pez~\cite{echeverria2025totally} showed that \cref{conj:OddImmersionHadwigers} holds for certain graph products. 

\cref{conj:OddImmersionHadwigers} implies in particular that every $n$-vertex graph $G$ contains a totally odd $K_{\lceil n/\alpha(G)\rceil}$-immersion (we write $\alpha(G)$ for the maximum size of an independent set in $G$). Just as in the case of Hadwiger's conjecture, the $\alpha(G) = 2$ case is of particular interest. Gauthier, Le, and Wollan~\cite{Gauthier2019Forcing} showed that every $n$-vertex graph $G$ with $\alpha(G) = 2$ contains an $K_{2\lfloor n/5\rfloor}$-immersion, and in fact their proof gives a totally odd immersion. Bustamante, Quiroz, Stein, and Zamora~\cite{bustamante2022clique} extend this to $\alpha(G) \geq 3$ by showing that every n-vertex graph $G$ contains a totally odd $K_t$-immersion for $t \geq \lfloor n/(2.25\alpha(G) - 2.25)\rfloor$. Very recently, Echeverr\'{i}a and McDonald~\cite{echeverria2026coloring} showed that every graph $G$ with $\alpha(G) = 2$ that does not contain a totally odd $K_t$-immersion is $\lceil 1.5(t-1)\rceil$-colorable.

Steiner~\cite{Steiner2022Asymptotic} answered \cref{conj:oddHadwigers} up to constant factor by showing that the question is equivalent to Hadwiger's Conjecture up to a factor of 2. This greatly simplified previous work on the asymptotics of \cref{conj:oddHadwigers} \cite{delcourt2025reducing, Geelen2009oddMinor, kawarabayashi2009note, norin2022new, postle2020further}. In particular, Steiner showed that in a graph with no odd $K_t$-minor, there exist disjoint bipartite induced subgraphs which can be identified to form a graph with no $K_t$-minor. This implies that if $K_t$-minor free graphs are $f(t)$-colorable, then odd $K_t$-minor free graphs are $2f(t)$-colorable. If Hadwiger's Conjecture is true, this implies that the maximum chromatic number of $K_t$ odd-minor free graphs is between $(1.5 - o(1))t$ and $2t$.

Similarly, Kawarabayashi~\cite{Kawarabayashi2013Totally} showed that graphs that do not contain a totally odd subdivision of $K_t$ are $\mathcal{O}(t^2)$-colorable by reducing to the case without parity restrictions. A subdivision is called \emph{totally odd} if every edge is replaced by a path of odd length. This improved on the exponential upper bound given by Thomassen~\cite{thomassen1983graph}. Kawarabayashi's result gave the previously best-known upper bound on the chromatic number of graphs that do not contain a totally odd $K_t$-immersion. We continue the trend of Kawarabayashi and Steiner by showing that the upper bound on the chromatic number of graphs that do not contain a totally odd $K_t$-immersion is the same as that of graphs with no $K_t$-immersion up to a constant factor.

 Graphs which forbid an immersion not only have linear chromatic number, they also have linear degeneracy. A graph is \emph{$k$-degenerate} if every subgraph has minimum degree at most $k$. The linear degeneracy of $K_t$-immersion free graphs was first shown by DeVos et al.~\cite{Devos2014Minimum}, with the best bound being the following due to Gauthier, Le, and Wollan~\cite{Gauthier2019Forcing}. This result will be very helpful in constructing immersions from dense graphs.

\begin{theorem}[{\cite[Theorem~1.4]{Gauthier2019Forcing}}]\label{thm:MinDegImmersion}
    If $G$ is a simple graph with minimum degree at least $7t+7$, then $G$ has a $K_t$-immersion.
\end{theorem}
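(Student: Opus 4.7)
The plan is to argue by induction on $|V(G)| + |E(G)|$, taking a minimal counterexample $G$ with $\delta(G) \geq 7t+7$ that admits no $K_t$-immersion. Edge-minimality is powerful here: for any $e \in E(G)$, the graph $G - e$ either immerses $K_t$ (which would pull back to an immersion in $G$, a contradiction), or $G - e$ violates the minimum-degree hypothesis, in which case $e$ must be incident to a vertex of degree exactly $7t+7$. This already forces a strong structural property on the edges of $G$.

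Next, I would aim to identify $t$ candidate branch vertices $v_1, \ldots, v_t$ whose pairwise local edge-connectivities in $G$ are large enough to host a $K_t$-immersion. My main tool would be Mader's splitting-off theorem: at any non-branch vertex $v$ of even degree, one can pair the edges at $v$ and replace each pair $uv, vw$ with an edge $uw$ while preserving pairwise edge-connectivities among all other vertices. Crucially, any $K_t$-immersion in the resulting smaller graph pulls back to one in $G$, so by iterating splittings we may reduce to a multigraph supported on $\{v_1, \ldots, v_t\}$; if every pair ends up with multiplicity at least one, we immediately extract the desired $K_t$-immersion.

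The key obstruction, and what the constant $7t+7$ is finely tuned to handle, is that splitting-off at $v$ may be blocked by a ``tight'' edge-cut through $v$. When this happens, Mader's lemma produces a structured obstruction: a small edge-cut $(X, V \setminus X)$ of specific value. One then performs a case analysis on such tight cuts, combining edge-minimality of $G$ with the minimum-degree hypothesis to derive a contradiction. The bound $7t+7$ encodes the resulting bookkeeping: each branch vertex must have degree large enough both to serve as the endpoint of $t-1$ trails and to absorb the edge losses incurred at splittings and at tight cuts.

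The hardest part will be making this cut/split case analysis tight enough to reach the constant $7t+7$ rather than a weaker bound like $\mathcal{O}(t^2)$. This requires carefully balancing the degree budget spent on ``local'' trail endpoints against the budget spent ensuring no low-value cut separates any pair of branch vertices; it is exactly the failure of this balance in earlier approaches such as that of DeVos et al.~\cite{Devos2014Minimum} that produced the larger constant $200t$. Once both budgets close, the case analysis eliminates all obstructions and yields the $K_t$-immersion promised by the theorem, contradicting the choice of $G$.
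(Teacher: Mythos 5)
The statement you are addressing is not proved in this paper; it is cited verbatim from Gauthier, Le, and Wollan~\cite{Gauthier2019Forcing} and used as a black box, so there is no in-paper proof to compare against. Assessing your sketch on its own terms: the high-level strategy of taking a minimal counterexample and reducing via splitting-off is consistent with the paradigm running through the immersion extremal-function literature since DeVos et al.~\cite{Devos2014Minimum}, but as written it is an outline rather than a proof, and the gap is precisely where you flag it yourself. You defer the entire tight-cut case analysis and all of the bookkeeping that would produce the constant $7t+7$. Concretely, (i) you give no mechanism for selecting the $t$ candidate branch vertices or for arguing that their pairwise local edge-connectivities are at least $t-1$; (ii) you never state what obstruction Mader's splitting-off theorem actually returns when a split at $v$ is infeasible, nor how edge-minimality together with $\delta(G)\geq 7t+7$ rules it out; and (iii) no arithmetic is carried out that would pin down $7t+7$ as opposed to some unspecified $ct$. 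Since the closing of that case analysis is the entire substance of the Gauthier--Le--Wollan argument (and of the earlier DeVos et al.\ argument that reached $200t$), what you have is a plausible plan of attack, not a proof: even granting every step you describe, the conclusion as stated does not follow until the deferred analysis is supplied.
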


This leads to the following corollary of \cref{thm:UnionOfBipAndNoImmersion}.

\begin{corollary}
    For every positive integer $t$, every simple graph that does not admit a totally odd immersion of $K_t$ is the union of a bipartite graph and a $(686{\small,}000t + 31{\small,}000{\small,}000)$-degenerate graph.
\end{corollary}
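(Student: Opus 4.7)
The plan is to combine the two theorems appearing immediately above, with essentially no work beyond arithmetic. Given a graph $G$ with no totally odd immersion of $K_t$, I would first invoke \cref{thm:UnionOfBipAndNoImmersion} to write $G$ as the union of a bipartite graph $B$ and a graph $H$ which contains no immersion of $K_s$, where I set
\[
s := 98{,}000\,t + 4{,}410{,}071.
\]
Since $B$ is bipartite, it suffices to show that $H$ is $(686{,}000\,t + 31{,}000{,}000)$-degenerate.

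For this I would apply \cref{thm:MinDegImmersion} in its contrapositive form: any simple graph with no $K_s$-immersion has minimum degree at most $7s+6$. The forbidden-immersion property is preserved under taking subgraphs (and under taking the underlying simple graph, since any $K_s$-immersion in the simplification lifts to one in the multigraph), so every subgraph of $H$ has a vertex of degree at most $7s+6$. This is precisely the statement that $H$ is $(7s+6)$-degenerate. If one insists on the multigraph setting, I would replace $H$ by its underlying simple graph and absorb the deleted parallel edges into $H$; this does not affect the degeneracy since degeneracy depends only on the underlying simple graph.

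Finally, the numerical check is
\[
7s + 6 \;=\; 7\cdot 98{,}000\,t + 7\cdot 4{,}410{,}071 + 6 \;=\; 686{,}000\,t + 30{,}870{,}503 \;\leq\; 686{,}000\,t + 31{,}000{,}000,
\]
so the bound claimed in the corollary follows, with a small amount of slack. The only ``obstacle'' here is bookkeeping the constants inherited from \cref{thm:UnionOfBipAndNoImmersion} and \cref{thm:MinDegImmersion}; no additional combinatorial ideas are required.
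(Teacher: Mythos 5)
Your approach is the intended one: apply \cref{thm:UnionOfBipAndNoImmersion}, then observe that forbidding a $K_s$-immersion is closed under taking subgraphs and use the contrapositive of \cref{thm:MinDegImmersion} to get a vertex of degree at most $7s+6$ in every subgraph of $H$; the arithmetic $7\cdot 4{,}410{,}071 + 6 = 30{,}870{,}503 < 31{,}000{,}000$ is correct. The only wrinkle is the parenthetical about multigraphs: the phrase ``absorb the deleted parallel edges into $H$'' is garbled (you cannot return edges to the graph they were deleted from, and they cannot always be pushed into $B$ either, since a parallel class whose endpoints lie on the same side of $B$'s bipartition would break bipartiteness). More substantively, under the paper's literal definition of degeneracy --- minimum degree of every subgraph, with degree in a multigraph counting multiplicities --- the corollary would actually fail: a triangle whose three edges each have multiplicity $N$ admits no $K_4$-immersion of any kind (it has only three vertices), yet any bipartite edge set must omit an entire parallel class, leaving a remainder of minimum degree $N$. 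So degeneracy here must be read as a property of the underlying simple graph, which is the reading you ultimately adopt and is consistent with \cref{thm:MinDegImmersion} being stated for simple graphs; just cut the confusing ``absorb'' sentence.
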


The bounds in \cref{thm:MainColoringResult} and \cref{thm:UnionOfBipAndNoImmersion} are not optimal even for the given proof strategy, but we prioritize readability over obtaining the smallest possible bound. We also note that any improvements to the bounds in \cref{thm:coloringNoImmersion} and \cref{thm:MinDegImmersion} lead to immediate improvements in our bounds. The connection to \cref{thm:MinDegImmersion} will become clear during the proof of \cref{thm:UnionOfBipAndNoImmersion} (see \cref{lem:MakingTotallyEven} and \cref{thm:makingImmersionOdd}).

\subsection{Comparison to previous approaches}\label{subsection:comparison}

One of the main tools used in this work, \cref{cor:OddCircuitsEP}, also appears in other work on immersions with group constraints (see for instance \cite[Corollary 3.2]{McCartyMW2026Structure}). However, the work of McCarty, Wollan, and the author~\cite{McCartyMW2026Structure} and other work on structure theorems for graphs forbidding such immersions \cite{Churchley2017Odd, WOLLAN2015} work directly with \emph{flower immersions}. The \emph{$(k,n)$-flower} is the graph obtained from $K_{1,n}$ by making each edge have multiplicity $k$. It is not hard to see that the $(t,t)$-flower contains $K_t$ as an immersion by splitting off at the high degree vertex. Moreover, if we could control the parity of the trails in the flower immersion, then we could also find $K_t$ as a totally odd immersion.

Previous approaches first found a flower immersion where all the trails are even (such an immersion is called \emph{totally even}). Then, because flowers contain a single vertex incident to every edge, they used a version of \cref{cor:OddCircuitsEP} to find many edge-disjoint odd circuits which are ``rooted'' at that vertex of the flower. The heart of the argument is then to ``disentangle'' the edge-disjoint odd circuits from the totally even flower immersion. The main difficulty in this paper is creating a totally odd clique immersion from a totally even clique immersion and many edge-disjoint odd circuits. This step is more difficult because we lack the special vertex incident to every edge. Previous approaches can never get the desired coloring bound exactly because of this high degree vertex; graphs of maximum degree less than $t^2$ can not contain a $(t,t)$-flower immersion.

\section{Preliminaries}\label{section:Preliminaries}

Graphs may have loops and parallel edges unless they are explicitly stated to be simple. An \emph{edge block} of a graph is a maximal bridgeless subgraph. For $X \subseteq V(G)$, we denote by $G \setminus X$ the graph formed from $G$ by deleting the vertices in $X$. For $F \subseteq E(G)$, we denote by $G \setminus F$ the graph formed from $G$ by deleting the edges in $F$. We denote by $G[X]$ and $G[F]$ the graph induced by $X \subseteq V(G)$ or $F \subseteq E(G)$ respectively. We denote by $G_1 \cup G_2$ the graph with vertex set $V(G_1) \cup V(G_2)$ and edge set $E(G_1) \cup E(G_2)$.

A \emph{trail} $T$ in a graph $G$ is a sequence of distinct edges $e_1e_2\dots e_n$ which can be oriented so that the head of $e_i$ is the tail of $e_{i+1}$ for all $i \in [n-1]$. We call such an orientation the orientation of the edges \emph{in $T$}. We call the tail of $e_1$ in $T$ and the head of $e_n$ in $T$ the \emph{ends} of $T$. We call the ends of $e_i$ for $i \in \{2, \dots, n-1\}$ the \emph{internal vertices} of the trail. Note that a vertex may both be an end and an internal vertex of a trail. We say a trail \emph{hits} a vertex if that vertex is an end of some edge in the trail. We denote by $T^{-1}$ the trail $e_ne_{n-1}\ldots e_1$. A \emph{subtrail} of a trail $T$ is a consecutive subsequence of the edges in $T$. A \emph{circuit} is a trail where both ends are identical.

We say that a graph $G$ admits a \emph{(weak) immersion} of a graph $H$ if there exist injective functions $\varphi_V: V(H) \rightarrow V(G)$ and $\varphi_E: E(H) \rightarrow \{\text{trails in }G\}$ such that 
\begin{enumerate}
    \item for all $e \in E(H)$ with ends $u,v \in V(H)$, $\varphi_E(e)$ has ends $\varphi_V(u), \varphi_V(v)$, and
    \item for all distinct $e_1, e_2 \in E(H)$, $\varphi_E(e_1)$ and $\varphi_E(e_2)$ are edge-disjoint.
\end{enumerate}
We call $\{\varphi_V(v) : v \in V(H)\}$ the \emph{branch vertices} of the immersion, and we call $\{\varphi_E(e) : e \in E(H)\}$ the \emph{branch trails}. We say that $G$ admits a \emph{strong immersion} of $H$ if the branch trails of the immersion have no internal vertex in the branch vertices of the immersion.

An immersion $H$ in a graph $G$ is said to be \emph{totally odd} (\emph{totally even}) if all of the branch trails have odd (even) length. We note that in the literature it is common to replace ``trails'' in the definition with ``paths''. The resulting definition is equivalent for immersions, but no longer equivalent for totally odd (weak or strong) immersions. To see this consider a branch trail which is the union of an even path and an odd cycle. We follow the definition as given in Churchley's original conjecture~\cite{Churchley2017Odd}, which is motivated by the following equivalent definition of immersions.

Equivalently, a graph $G$ admits an immersion of a graph $H$ if $H$ can be formed by deleting vertices, deleting edges, and splitting off pairs of incident edges. Given two incident edges $e,f$ such that $e$ has ends $u,v$ and $f$ has ends $v,w$, the operation of \emph{splitting off the edges $e,f$ at $v$} is done by deleting $e$ and $f$ and adding an edge with ends $u,w$. Then an edge $e$ in the immersion $H$ correspond to a trail in the original graph $G$ with edge set equal to the edges which were split off to form $e$.

We also introduce the notion of \emph{transitions}. Let $T = e_1e_2\dots e_n$ be a trail. The \emph{transitions} of $T$ are the ordered pairs $(\{e_i, e_{i+1}\}, v)$ where $\{e_i, e_{i+1}\}$ are edges that appear in sequence along the trail with $v$ equal to both the head of $e_i$ in $T$ and the tail of $e_{i+1}$ in $T$. Note that $T^{-1}$ has the same set of transitions as $T$. We say a transition $(\{e_i, e_{i+1}\}, v)$ is a transition \emph{at $v$}, and that the transition \emph{contains} $e_i, e_{i+1}$. The \emph{transitions} of an immersion are the union of the transitions of its branch trails. The transitions of an immersion encode splitting off operations needed to form the immersion.

A main ingredient of our proof is an Erd\"{o}s-P\'{o}sa property for odd circuits. We derive the result as a corollary of the following result on odd $A$-paths due to Geelen, Gerards, Reed, Seymour, and Vetta~\cite{Geelen2009oddMinor}. An \emph{$A$-path} is a path which has both endpoints in a set $A \subseteq V(G)$ and is internally vertex-disjoint from $A$. An \emph{$x$-circuit} is a circuit hitting a vertex $x \in V(G)$ with no internal vertex equal to $x$.

\begin{lemma}[{\cite[Lemma~11]{Geelen2009oddMinor}}]\label{lem:OddPathsEP}
    Let $G$ be a graph and $k \geq 1$ be an integer. For any set $A \subseteq V(G)$, either
    \begin{enumerate}
        \item there are $k$ vertex-disjoint odd $A$-paths, or
        \item there is a set $X \subseteq V(G)$ of size at most $2k-2$ such that every $A \setminus X$ path in $G\setminus X$ has even length.
    \end{enumerate}
\end{lemma}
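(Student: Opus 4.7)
The plan is to reduce the statement to a Mader-type disjoint-paths theorem in an auxiliary bipartite double cover that encodes parity. First I would build $G'$ on vertex set $V(G) \times \{0,1\}$ with an edge $(u,0)(v,1)$ for each $uv \in E(G)$. Any walk in $G$ from $u$ to $v$ lifts to a walk in $G'$ whose endpoints lie in opposite layers if and only if it has odd length; in particular, any odd $A$-path in $G$ lifts to a path in $G'$ between $A_0 := A \times \{0\}$ and $A_1 := A \times \{1\}$.

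The key structural observation is that two odd $A$-paths are vertex-disjoint in $G$ if and only if their lifts use disjoint columns $C_v := \{v\} \times \{0,1\}$ in $G'$. So finding $k$ vertex-disjoint odd $A$-paths in $G$ reduces to finding $k$ pairwise column-disjoint $A_0$--$A_1$ paths in $G'$, which is a matroid matching problem relative to the column partition. I would then invoke a Lov\'{a}sz--Mader type duality: either such a family of $k$ paths exists, or a union of at most $k-1$ columns in $G'$ separates $A_0$ from $A_1$. Projecting the columns back to their base vertices in $G$ yields a hitting set $X \subseteq V(G)$ of size at most $2(k-1) = 2k-2$ such that every $A \setminus X$ path in $G \setminus X$ has even length.

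The main obstacle I expect is that the lift/project correspondence is imperfect: lifts of paths are paths, but projections of paths in $G'$ back to walks in $G$ can repeat vertices, and such walks might contain shorter sub-paths of the wrong parity. So a clean reduction to Menger's theorem on $G'$ is not available, and one really needs the signed-graph matroid matching duality of Geelen, Gerards, Reed, Seymour, and Vetta (or an equivalent LP half-integrality argument) to obtain the sharp $2k-2$ bound rather than something like $4k-4$. I would handle the correspondence by working with a minimal counterexample, argue that an optimal family of column-disjoint $A_0$--$A_1$ paths can be chosen to project to genuine paths, and then port the column cut back cleanly.
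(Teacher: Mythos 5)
First, note that the paper does not actually prove this lemma: it is quoted verbatim from Geelen, Gerards, Reed, Seymour, and Vetta as Lemma~11 of \cite{Geelen2009oddMinor}, so there is no in-paper proof against which to compare your attempt.

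As for the argument itself, the bipartite double-cover reduction is a sound starting point, and your observation that column-disjoint $A_0$--$A_1$ paths in $G'$ project to odd walks which can be trimmed to genuine odd $A$-paths while preserving disjointness is correct. The gap is the duality you invoke. The statement ``either $k$ column-disjoint $A_0$--$A_1$ paths exist, or at most $k-1$ columns separate $A_0$ from $A_1$'' is false. Take $G = K_4$ with $A$ equal to three of its four vertices: the double cover $G'$ is the $3$-cube, at most one column-disjoint $A_0$--$A_1$ path exists, yet no single column separates $A_0$ from $A_1$; two are needed. The correct statement is that either $k$ column-disjoint paths exist or at most $2(k-1)$ columns separate, and this factor-of-two loss is precisely the content of the lemma. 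It does not come from Menger's theorem or a routine ``matroid matching'' appeal; it comes from Mader's $\mathcal{S}$-paths theorem (whose min side carries a floor term, hence the $2$) or the group-labeled generalization in~\cite{chudnovsky2006packing}, and establishing it is the real work of GGRSV's Lemma~11, which you have not reproduced. Your accounting is also internally inconsistent: $k-1$ columns of $G'$ project to $k-1$ vertices of $G$, not $2(k-1)$ vertices; to reach the lemma's bound of $2k-2$ you need a separating set of $2(k-1)$ columns in the first place, which is the stronger, unproven claim.
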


This gives the following corollary by applying the above to the line graph of a 1-subdivision.

\begin{corollary}\label{cor:OddCircuitsEP}
    Let $G$ be a graph and $k \geq 1$ be an integer. For any vertex $x \in V(G)$, either
    \begin{enumerate}
        \item there are $k$ edge-disjoint odd $x$-circuits, or
        \item there is a set $X \subseteq E(G)$ of size at most $2k-2$ such that every $x$-circuit in $G \setminus X$ has even length.
    \end{enumerate}
\end{corollary}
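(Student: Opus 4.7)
The plan is to apply Lemma 2.3 to the auxiliary graph $H = L(G')$, where $G'$ is the $1$-subdivision of $G$, with $A \subseteq V(H)$ chosen to be the set of vertices of $H$ corresponding to the edges of $G'$ that are incident to $x$.

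First I would set up a correspondence between $x$-circuits in $G$ and $A$-paths in $H$. An $x$-circuit of length $n$ in $G$ lifts to a trail of length $2n$ in $G'$ (each edge of the circuit being replaced by the pair of edges through its middle vertex), and listing these $2n$ edges in order gives an $A$-path in $H$ of length $2n - 1$. Conversely, because each middle vertex of $G'$ has degree exactly $2$, any $A$-path in $H$ of length at least $2$ projects back to a legitimate $G'$-trail, which corresponds to an $x$-circuit in $G$. Crucially, edge-disjoint $x$-circuits in $G$ correspond to vertex-disjoint $A$-paths in $H$, since vertices of $H$ are edges of $G'$ and each $G$-edge splits into two $G'$-halves.

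I would then apply Lemma 2.3 to $(H, A, k)$ and translate each outcome. Outcome~2 gives a vertex set $X \subseteq V(H) = E(G')$ of size at most $2k - 2$; the natural edge set $F \subseteq E(G)$ consisting of those edges of $G$ whose halves in $G'$ lie in $X$ satisfies $|F| \leq |X| \leq 2k - 2$, and any odd $x$-circuit in $G \setminus F$ would produce an odd $A$-path in $H \setminus X$, contradicting outcome~2. Outcome~1 yields $k$ vertex-disjoint odd $A$-paths, which under the correspondence give $k$ edge-disjoint $x$-circuits in $G$.

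The main subtlety will be the parity bookkeeping: the $A$-path corresponding to an $x$-circuit of length $n$ has length $2n - 1$, which is odd regardless of the parity of $n$. I expect the correct accounting requires tracking separately the two kinds of transitions in $L(G')$---those at middle vertices (corresponding to traversing a single $G$-edge) and those at original vertices of $G$ (corresponding to transitions between successive circuit edges)---and identifying the parity of the circuit with the parity of one of these counts. This step, aligning the combinatorial parity of $A$-paths in the line graph with the parity of the corresponding $x$-circuits in $G$, is the main technical hurdle; the remaining arguments for edge-disjointness, the size of $F$, and the deduction of both outcomes are then routine.
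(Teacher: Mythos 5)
You have the right skeleton and you have correctly identified the crux, but you have not closed the gap, and as written the argument does not go through. Applying Lemma~\ref{lem:OddPathsEP} directly to $H=L(G')$ cannot work: as you observe yourself, an $x$-circuit of length $n$ in $G$ lifts to an $A$-path of length $2n-1$ in $H$, which is odd for every $n$. Lemma~\ref{lem:OddPathsEP} only distinguishes $A$-paths by the parity of their length, so all of your lifted paths look odd to it; outcome~1 would hand you $k$ vertex-disjoint ``odd'' $A$-paths that may well all come from even $x$-circuits, and outcome~2 would hit every $x$-circuit, even the even ones, giving you nothing sharper than a bound on all $x$-circuits. Your proposed fix---``identifying the parity of the circuit with the parity of one of these counts''---is not something Lemma~\ref{lem:OddPathsEP} lets you do: the lemma has no mechanism for reinterpreting ``odd'' to mean ``odd number of transitions of a distinguished type.'' You would need the group-labeled generalization of the lemma (which the paper mentions exists, but does not use here), and you have not invoked it.

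The paper's trick is a small but essential modification of the auxiliary graph. Instead of applying the lemma to $H=L(G')$, it forms $H'$ by subdividing once every edge of $H$ lying inside a clique $\delta_{G'}(v)$, while leaving the edges $f_e$ (which cross between two such cliques, corresponding to traversing a single edge of $G$) unsubdivided. Under the lift, an $x$-circuit of length $n$ now corresponds to an $A$-path in $H'$ using $n$ edges $f_e$ and $2(n-1)$ subdivided clique edges, for a total length of $3n-2\equiv n\pmod 2$. So parity is now preserved, and Lemma~\ref{lem:OddPathsEP} applies to $H'$ with $A=\delta_{G'}(x)$ exactly as you had hoped. (The paper also verifies, via a $2$-coloring of $H'$ in which the $f_e$ are precisely the monochromatic edges, that an odd cycle in $H'$ cannot live inside a single subdivided clique and hence really does project to a circuit in $G$, and it tidies the hitting set $X\subseteq V(H')$ back into a subset of $E(G)$ by replacing degree-two subdivision vertices with a neighbor.) In short: your roadmap matches the paper's, but the step you label as ``the main technical hurdle'' is in fact the entire content of the proof, and the paper's way of crossing it---subdividing half the edges of $L(G')$---is the ingredient you are missing.
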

\begin{proof}
    Let $G'$ be the $1$-subdivision of $G$, and let $H$ be the line graph of $G'$. Note that each vertex $v \in V(G)$ corresponds to a clique $\delta_{G'}(v)$ in $H$, and that these cliques are pairwise vertex-disjoint. For an edge $e \in E(G)$ with ends $u,v \in V(G)$, the vertex that subdivides $e$ corresponds to a clique of size two in $H$, and the edge $f_e$ of this clique is exactly between the cliques $\delta_{G'}(u), \delta_{G'}(v)$. Let $H'$ be formed by subdividing every edge in a clique of type $\delta_{G'}(v)$ for $v \in V(G)$, but not the edges $f_e$ for $e \in E(G)$. Note that there exists a 2-coloring of $H'$ such that the edges $f_e$ for $e \in E(G)$ are exactly the monochromatic edges.

    There is a natural mapping from vertex-disjoint odd cycles in $H'$ to edge-disjoint odd circuits in $G$ by mapping a cycle in $H'$ to the edges $e\in E(G)$ so that $f_e$ is contained in the cycle. Note that if a cycle in $H'$ has odd length, then it must contain an odd number of monochromatic edges in any 2-coloring, and so must use an odd number of edges $f_e$. In particular the odd cycle cannot be entirely contained in the subdivision of a clique $\delta_{G'}(v)$, and thus the resulting edges form a circuit in $G$. Let $A \coloneq \delta_{G'}(x) \subseteq V(H')$. Note that every odd $A$-path in $H'$ can be extended to an odd cycle in $H'$ by taking the path of length 2 between its endpoints given by the subdivided edge of the clique $\delta_{G'}(x)$ in $H$. Thus if $H'$ has $k$ vertex-disjoint odd $A$-paths, then $G$ has $k$ edge-disjoint odd $x$-circuits. 

    So by \cref{lem:OddPathsEP}, we may assume that there is a set $X \subseteq V(H')$ of size at most $2k-2$ which hits all odd $A$-paths in $H'$. We may assume that $X$ is a subset of $V(H)$ since vertices in $V(H')\setminus V(H)$ have degree two in $H'$, and we could have included one of their neighbors in $X$ instead. Thus we can consider $X$ as a subset of $E(G')$. Then the set of edges in $G$ for which one of their two corresponding edges in $G'$ lies in $X$ hits all odd $x$-circuits of $G$. The result follows.
\end{proof}

Note that if every $x$-circuit of $G \setminus X$ has even length, then the edge block containing $x$ in $G \setminus X$ must be bipartite. To see this, note that by taking two edge-disjoint paths from $x$ to an odd cycle we can always obtain an odd $x$-circuit.

We note that both \cref{lem:OddPathsEP} and \cref{cor:OddCircuitsEP} are specific cases of a more general result about group-labeled graphs; see \cite{chudnovsky2006packing} and \cite{McCartyMW2026Structure} respectively.

\section{From totally even to totally odd}
In this section we first show that every large clique immersion contains a smaller one which is totally even. We then show, essentially, that if there are many edge-disjoint odd circuits or trails with ends in the branch vertices of a totally even clique immersion, then there is a totally odd $K_t$-immersion. In graphs which forbid a totally odd $K_t$-immersion, this will allow us to apply \cref{cor:OddCircuitsEP} in order to separate off a bipartite piece of our graph in the presence of a large clique immersion.

\begin{lemma}\label{lem:MakingTotallyEven}
    Let $G$ be a graph with a $K_{70t + 71}$-immersion. Then $G$ contains a totally even $K_t$-immersion. Moreover, there exists an algorithm that takes as input a graph $G$ and the branch trails of a $K_{70t + 71}$ immersion and returns the branch trails of a totally even $K_t$-immersion in $\mathcal{O}_t(|E(G)|)$ time.
\end{lemma}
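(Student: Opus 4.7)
The plan is to reduce to the minimum-degree immersion theorem of Gauthier, Le, and Wollan (\cref{thm:MinDegImmersion}) by constructing an auxiliary ``even'' multigraph on the branch vertices. Let $N = 70t + 71$, let $B$ be the branch vertex set of the given $K_N$-immersion, and let $T_{ij}$ denote the branch trail between $v_i, v_j \in B$. Define $G_e$ to be the multigraph on vertex set $B$ with one edge for each branch trail of even length. The key pullback observation is that any $K_t$-immersion in $G_e$ yields a totally even $K_t$-immersion in $G$: each $G_e$-edge represents an even-length trail in $G$, and concatenating such trails (by splitting off at their shared branch-vertex endpoints) gives a trail of even length, since a sum of even numbers is even. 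Edge-disjointness carries over because the original branch trails were pairwise edge-disjoint.

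To make $G_e$ satisfy the hypothesis of \cref{thm:MinDegImmersion}, I iteratively process ``bad'' branch vertices. A vertex $v \in B$ is \emph{bad} if its $G_e$-degree is less than $7t + 7$; equivalently, at least $|B| - 1 - (7t + 6)$ of the trails at $v$ are odd. For such $v$, I fix an arbitrary pairing of the odd trails at $v$ and a separate arbitrary pairing of the even trails at $v$, and then at $v$ I split off each pair in the immersion. Each odd-odd pair concatenates into a new trail of even length, and each even-even pair likewise, so I gain $\lfloor d^o(v)/2 \rfloor + \lfloor d^e(v)/2 \rfloor$ new even trails among the remaining branch vertices. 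I add these to $G_e$ as multi-edges and remove $v$ from $B$, producing a $K_{|B|-1}$-immersion on the smaller branch set.

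The main bookkeeping is a potential-function argument. At each iteration $|E(G_e)|$ changes by at least $-d^e(v) + \lfloor (|B|-1)/2 \rfloor - 1 \geq (|B| - 14t - 15)/2$, which is strictly positive while $|B| \geq 14t + 17$. Summing these gains as $|B|$ decreases from $70t + 71$, I accumulate a total of $\Omega(t^2)$ new edges in $G_e$, so that when $|B|$ reaches the threshold the average $G_e$-degree is well above $2(7t + 7)$. The standard degeneracy fact (any graph of average degree $\geq 2k$ contains a subgraph of minimum degree $\geq k$ on at least $k + 1$ vertices) then extracts a set $S \subseteq B$ of size $\geq 7t + 8$ with $G_e[S]$ of minimum degree $\geq 7t + 7$. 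Applying \cref{thm:MinDegImmersion} to $G_e[S]$ produces a $K_t$-immersion, which the pullback observation translates into a totally even $K_t$-immersion in $G$.

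The hard part will be the bookkeeping to match the exact constant $70t + 71$: I must balance the threshold below which splitting off no longer increases $|E(G_e)|$ (roughly $|B| \geq 14t + 17$), the cumulative edge gain required for the condensation step, and the $\geq 7t + 8$ vertices required for \cref{thm:MinDegImmersion} to yield $K_t$. For the algorithmic version, each splitting-off can be implemented in $\mathcal{O}(|E(G)|)$ time by updating trail data structures, there are $\mathcal{O}(t)$ iterations, the condensation runs in $\mathcal{O}(|B|^2)$, and \cref{thm:MinDegImmersion} has a constructive proof in polynomial time, giving the claimed $\mathcal{O}_t(|E(G)|)$ runtime.
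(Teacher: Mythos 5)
Your approach diverges substantially from the paper's, which uses a one-shot case split rather than iterative splitting-off. The paper observes that among the $\binom{70t+71}{2}$ branch trails, either at least $4/5$ are odd or at least $1/5$ are even; in the first case it restricts the auxiliary (simple) graph on branch vertices to the odd-trail edges, takes a bipartite subgraph of average degree $\geq 28t+28$, extracts a minimum-degree $\geq 14t+14$ subgraph, applies \cref{thm:MinDegImmersion} to get a $K_{2t}$-immersion, and keeps the $\geq t$ branch vertices on one side of the bipartition (same-side trails have even length in a bipartite graph, and a sum of an even number of odd numbers is even); in the second case the restriction to even-trail edges already has average degree $\geq 14t+14$ and the result follows directly. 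This avoids all of the difficulties below.

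Your iterative scheme has two genuine gaps that are not mere bookkeeping. First, \cref{thm:MinDegImmersion} requires a \emph{simple} graph of minimum degree $\geq 7t+7$, but your $G_e$ accumulates parallel edges as soon as you split off: a pair of trails at $v$ ending at $u$ and $w$ produces a $uw$-edge that may duplicate an existing one. Your potential function tracks $|E(G_e)|$ as a multigraph quantity, and the standard degeneracy extraction then yields a high-minimum-degree \emph{multigraph} subgraph, which does not satisfy the hypothesis of \cref{thm:MinDegImmersion}. You would need to either bound multiplicities or argue directly that the simplification of $G_e$ is dense, and nothing in the argument does this. Second, your ``arbitrary pairing'' at a bad vertex $v$ can pair two trails whose other endpoints coincide; concatenating them yields a closed circuit at a single vertex rather than an edge between two distinct branch vertices, so the claimed gain of $\lfloor d^o(v)/2\rfloor+\lfloor d^e(v)/2\rfloor$ new edges in $G_e$ is not justified. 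This is automatic at the first step (where multiplicities are $1$) but fails once parallel edges appear, which is exactly when you most need the gain. Both issues disappear in the paper's non-iterative argument, where the auxiliary graph remains a simple subgraph of $K_{70t+71}$ throughout.
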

\begin{proof}
    We split into cases based on the proportion of branch trails of odd length. If at least $4/5$ of the trails have odd length, then we consider the totally odd immersion $G'$ of $G$ formed from the $K_{70t + 71}$-immersion by restricting to the trails of odd length. Note that $G'$ has average degree at least $(70t + 71-1)(4/5) =56t + 56$. We then find a bipartite subgraph of $G'$ of average degree at least $28t + 28$, which itself has a subgraph $H$ of minimum degree at least $14t + 14$. By \cref{thm:MinDegImmersion}, $H$ has a $K_{2t}$-immersion. By keeping the branch vertices on only one side of the bipartition we obtain the desired totally even $K_t$-immersion.

    If at least $1/5$ of the branch trails have even length, we instead consider the totally even immersion $G'$ formed from the $K_{70t + 71}$-immersion by restricting to the trails of even length. Note that $G'$ has average degree at least $14t + 14$. So $G'$ has a subgraph $H$ of minimum degree at least $7t + 7$, which by \cref{thm:MinDegImmersion} contains the desired totally even $K_t$-immersion.

    For the algorithm, note that after finding the parity of each branch trail in $\mathcal{O}_t(|E(G)|)$ time, it suffices to work over a subgraph of $K_{70t + 71}$. We may then find a totally even $K_t$-immersion by brute force.
\end{proof}

We now show how to get a totally odd clique immersion from a totally even one when there are many edge-disjoint odd trails with ends in the branch vertices of the totally even clique immersion where each branch vertex is not the end of too many odd trails. We will first make our odd trails edge-disjoint from all but a few of the branch trails of the totally even immersion. We then delete the branch trails which are not disjoint, and we delete a few vertices to keep high minimum degree in the resulting immersion. We then make all of the odd trails into odd circuits, and group all of the vertices with many odd circuits together. We use the edges between these vertices and the rest of the graph to create a totally even clique immersion with many odd circuits at each branch vertex, and then we use those odd circuits to make the clique immersion totally odd. 

\begin{theorem}\label{thm:makingImmersionOdd}
    Let $t \geq 49$, and let $G$ be a graph with a totally even $K_{1400t}$-immersion with branch vertices $S \subseteq V(G)$. Consider constructing an auxiliary graph by adding a vertex $x$ adjacent to every vertex in $S$ with multiplicity $20t$. Suppose there is a collection of $3500t^2$ edge-disjoint odd $x$-circuits in this auxiliary graph. Then $G$ contains a totally odd $K_t$-immersion.

    Furthermore, there exists an algorithm that takes as input the above auxiliary graph, the branch trails of a totally even $K_{1400t}$-immersion, and the collection of $3500t^2$ edge-disjoint odd $x$-circuits and returns the branch trails of a totally odd $K_t$-immersion in $\mathcal{O}_t(|E(G)|^2)$ time.
\end{theorem}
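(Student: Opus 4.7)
The plan is to convert the given odd $x$-circuits into odd trails between pairs of branch vertices of the totally even $K_{1400t}$-immersion, and then combine these odd trails with branch trails of the even immersion to fix parities, producing a totally odd $K_t$-immersion. First, since $x$ is only at the two endpoints of each odd $x$-circuit and never an internal vertex, each odd $x$-circuit uses exactly two edges at $x$. Deleting these two edges from each of the $3500t^2$ given odd $x$-circuits yields a family $\mathcal{T}$ of $3500t^2$ edge-disjoint odd trails in $G$, each having both endpoints in $S$.

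The next phase is to make our odd trails coexist nicely with the branch trails of the even immersion. Because the odd trails and branch trails are each individually edge-disjoint, the bipartite incidence count between them is bounded. A greedy/averaging argument should yield a subset $S_1 \subseteq S$ of branch vertices with $|S_1|$ a linear function of $t$ (the precise constant absorbed into the slack of the $1400t$ bound) such that the corresponding totally even $K_{|S_1|}$-sub-immersion has branch trails edge-disjoint from a subfamily $\mathcal{T}_1 \subseteq \mathcal{T}$ of size $\Omega(t^2)$; odd trails overlapping with remaining branch trails are discarded, and a few further branch vertices are dropped to maintain minimum degree. For each surviving odd trail $T \in \mathcal{T}_1$ with endpoints $s, s' \in S_1$, the concatenation $T \cup B_{s, s'}$ is an odd circuit at $s$ (even plus odd equals odd), edge-disjoint from all branch trails other than $B_{s, s'}$ and from odd trails that share no endpoint with $T$.

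I would then apply pigeonhole on the endpoints of $\mathcal{T}_1$ to locate a subset $S_2 \subseteq S_1$ of size at least $t$ where each vertex serves as an endpoint of at least $\Omega(t)$ odd trails in $\mathcal{T}_1$ whose other endpoint lies in $S_1 \setminus S_2$, and designate these as the branch vertices of the final $K_t$-immersion. For each pair $\{s_i, s_j\}$ within this skeleton, the odd branch trail from $s_i$ to $s_j$ in the final immersion is built as the concatenation of $B_{s_i, s_j}$ with an odd circuit at $s_i$: that circuit consists of an odd trail $T_{ij} \in \mathcal{T}_1$ from $s_i$ to a helper vertex $s_{k_{ij}} \in S_1 \setminus S_2$, closed by the branch trail $B_{s_i, s_{k_{ij}}}$. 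Selecting the helpers $s_{k_{ij}}$ greedily so that they are all distinct and lie outside the skeleton ensures that the $\binom{t}{2}$ helper branch trails are pairwise distinct and distinct from the skeleton branch trails, yielding the desired pairwise edge-disjoint odd trails forming the totally odd $K_t$-immersion.

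The main obstacle is the bookkeeping in the second paragraph: verifying that the constants $1400$ and $3500$ truly suffice so that after restricting to $S_1$, cleaning up conflicting trails, and applying pigeonhole to extract $S_2$, we still have $\Omega(t)$ odd trails at each of $t$ chosen branch vertices together with $\binom{t}{2}$ distinct helper vertices available for greedy matching. Algorithmically, extraction of odd trails is linear in $|E(G)|$, the restriction step and greedy matching of helpers are polynomial in $|E(G)|$ with a factor of $t$, and the final assembly uses only objects already computed, giving the claimed $\mathcal{O}_t(|E(G)|^2)$ time complexity.
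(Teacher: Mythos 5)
Your proposal gets the high-level plan right (strip the endpoints at $x$ to get odd trails with ends in $S$, make them coexist with the even immersion, then fix parities), and your way of producing odd branch trails — concatenating $B_{s_i,s_j}$ with an odd circuit at $s_i$ made of an odd trail plus a ``helper'' branch trail $B_{s_i,s_{k_{ij}}}$ — is genuinely different from the paper's. The paper instead splits off each odd trail all the way down to a loop at a single branch vertex, and then builds a \emph{fresh} $K_t$-immersion among the high-loop vertices using only edges of the even immersion, so it never has to recycle the even immersion's branch trails inside its odd circuits. Either skeleton could in principle work, but yours carries the extra burden that, for each $s_i$, the helpers $s_{k_{ij}}$ ranging over edges assigned to $s_i$ must be pairwise distinct so that the helper branch trails $B_{s_i,s_{k_{ij}}}$ don't repeat. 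A simple pigeonhole giving ``$\Omega(t)$ odd trails at each of $t$ vertices'' does not ensure these trails go to distinct helpers; you would need an argument controlling the degree distribution of endpoints (this is what the paper's ``full vertex'' bookkeeping and the loop-conversion are doing implicitly).

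The more serious gap, though, is the disjointness step. You assert that ``because the odd trails and branch trails are each individually edge-disjoint, the bipartite incidence count between them is bounded,'' and then wave at a greedy/averaging argument. This is not true in any useful sense: the number of pairs (odd trail, branch trail) that share an edge is bounded only by $|E(G)|$, which is unbounded in $t$. A single odd $x$-circuit in the given collection can wind through an unbounded number of branch trails, so no subsampling of the given family can be made disjoint from most of the branch trails. The paper's essential insight here is \Cref{claim:DisentanglingCircuits}: you must \emph{re-choose} the family of odd trails, taking the collection that minimizes the number of transitions lying outside the transitions of the $K_{1400t}$-immersion (and, secondarily, maximizing the number of branch trails whose first/last edge is the first/last edge of some odd trail). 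That extremal choice, proved by an exchange argument, is what forces every branch trail meeting the family to have either both ends ``full'' or its first/last edge shared — yielding an $O(t^2)$ bound on the number of affected branch trails. Without this re-choosing step, the decomposition into a clean sub-immersion plus a usable family of odd trails breaks down, so the rest of your construction cannot get off the ground.
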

\begin{proof}
    By removing the first and last edge of every odd $x$-circuit, every odd $x$-circuit in the auxiliary graph gives an odd trail in $G$ with ends in $S$. Note that this trail may in fact be a circuit, and both ends may be the same vertex in $S$. We call this set of odd trails $\mathcal{C}$. We call a vertex $v \in S$ \emph{full} if $v$ occurs as the end of $20t$ trails in $\mathcal{C}$, where we count a circuit in $\mathcal{C}$ with $v$ as both ends twice. Note that there are at most $3500t^2/20t=175t$ full vertices. We first choose $\mathcal{C}$ to be, informally, as disjoint from the branch trails of the $K_{1400t}$-immersion as possible.

    \begin{claim}\label{claim:DisentanglingCircuits}
        We can choose $\mathcal{C}$ such that for every branch trail of the $K_{1400t}$-immersion, either
        \begin{enumerate}
            \item it is edge-disjoint from every trail in $\mathcal{C}$,
            \item\label{itm:disentanglingOption2} the first or last edge of the branch trail is the first or last edge of a trail in $\mathcal{C}$, or
            \item both ends of the branch trail are full.
        \end{enumerate}
    \end{claim}
    \begin{proof}
        We choose $\mathcal{C}$ to minimize the number of transitions of trails in $\mathcal{C}$ which are not transitions of the $K_{1400t}$-immersion. Subject to this, we minimize the number of branch trails of the $K_{1400t}$-immersion for which \cref{itm:disentanglingOption2} does not hold. Suppose towards a contradiction that there exists a branch trail $T$ of the $K_{1400t}$-immersion such that none of the items hold. Let $v \in S$ be an end of $T$ which is not full, and let $e \in E(T)$ be the first edge along $T$ starting from $v$ which appears in some $C \in \mathcal{C}$.

        Suppose that $e$ is the first edge of $T$ starting from $v$. So by assumption, $e$ is neither the first nor last edge of $C$. By possibly replacing $C$ with $C^{-1}$, we may assume that $v$ is the tail of $e$ in $C$. Let $C_1$ and $C_2$ be non-empty trails so that $C = (C_1, C_2)$ and $e$ is the first edge of $C_2$. Let $f$ denote the last edge of $C_1$. One of $C_1, C_2$ has odd length, and both do not contain the transition $(\{f, e\}, v)$. Since $(\{f, e\}, v)$ is not a transition of the $K_{1400t}$-immersion, replacing $C$ with one of $C_1, C_2$ contradicts the choice of $\mathcal{C}$.
        
        Suppose instead that $e$ is not the first edge of $T$. By possibly replacing $C$ with $C^{-1}$, we may assume that $e$ has the same orientation in $T$ and $C$. Let $T_1, T_2, C_1, C_2$ be trails so that $T = (T_1, T_2)$,  $C = (C_1, C_2)$, and $e$ is the first edge in $T_2$ and $C_2$. The sum of the length of the trails $(T_1, C_2)$ and $(C_1, T_1^{-1})$ is odd, so one of these is odd. First suppose that $(T_1, C_2)$ is odd. If $C_1$ contains at least one edge, then the trail $(T_1, C_2)$ has fewer transitions than $C$ outside of the transitions of the $K_{1400t}$-immersion because it does not have the transition containing $e$ and its preceding edge in $C$. So suppose that $C_1$ contains no edges and $C$ starts with $e$. Note that replacing $C$ with $(T_1, C_2)$ does not increase the number of transitions of trails in $\mathcal{C}$ which are not transitions of the $K_{1400t}$-immersion. Furthermore, this replacement makes $T$ satisfy \cref{itm:disentanglingOption2} while not changing whether any other branch trail satisfies \cref{itm:disentanglingOption2}.
        
        For the final case, suppose that the trail $(C_1, T_1^{-1})$ is odd. If $C_2$ is not a subtrail of $T$, then $C_2$ has a transition outside those of the $K_{1400t}$-immersion, that being the transition including the first edge after $e$ in $C$ which is not an edge of $T$. So we may assume that $C_2$ is a subtrail of $T$. Note that replacing $C$ with $(C_1, T_1^{-1})$ does not increase the number of transitions of trails in $\mathcal{C}$ which are not transitions of the $K_{1400t}$-immersion. Furthermore, this replacement makes $T$ satisfy \cref{itm:disentanglingOption2} while not changing whether any other branch trail satisfies \cref{itm:disentanglingOption2}. Therefore, by replacing $C$ with one of $(T_1, C_2)$, $(C_1, T_1^{-1})$ we obtain a collection which contradicts our choice of $\mathcal{C}$.

        We can improve our collection $\mathcal{C}$ in $\mathcal{O}_t(|E(G)|)$ time, and there are at most $\mathcal{O}_t(|E(G)|)$ transitions of trails in $\mathcal{C}$, so we can update our collection $\mathcal{C}$ in $\mathcal{O}_t(|E(G)|^2)$ time.
    \end{proof}
    
    By \cref{claim:DisentanglingCircuits}, at most
    $$2(3500t) + \binom{175t}{2} < 22500t^2$$
    branch trails of the $K_{1400t}$-immersion share an edge with an element of $\mathcal{C}$. Let $G'$ be an immersion of $G$ formed from $K_{1400t}$ by deleting all edges whose branch trails share an edge with an element of $\mathcal{C}$. We may also split off edges to make each element in $\mathcal{C}$ into an edge or loop with end(s) in $S$, and we remember that these correspond to trails of odd length. Note that the edges and loops of $\mathcal{C}$ are not in $G'$. Algorithmically, we may now work over $G'$ and these edges or loops in $\mathcal{C}$, so the rest of the proof has a runtime depending only on $t$.

    We now delete all the $100t$ vertices in $G'$ of lowest degree. We claim that all remaining vertices in $G'$ had degree at least $950t$ in $G'$. Otherwise more than $100t$ vertices had their degree decreased by at least $450t$ compared to $K_{1400t}$, so more than $22500t^2$ edges were deleted to form $G'$ from $K_{1400t}$, a contradiction. After deleting these vertices we lose at most $(20t)(100t) = 2000t^2$ elements of $\mathcal{C}$, so we still have a collection $\mathcal{C}' \subseteq \mathcal{C}$ of size $1500t^2$. Because we deleted $100t$ vertices, the remaining vertices of $G'$ have minimum degree at least $850t$. Let $S' \subseteq S$ be these remaining vertices, and consider the subgraph of $G'$ induced on $S'$.

    We now make every element of $\mathcal{C}'$ into a loop with its end in $S'$ while sacrificing the minimum degree slightly. We do this by performing the following algorithm. For each non-loop edge in $\mathcal{C}'$ with ends $u,v \in S'$, we find the vertex $w$ in the common neighborhood of $u$ and $v$ of largest degree and split off the edges $uw$ and $wv$ to make the edge into a loop at either $u$ or $v$. Because the edge in $\mathcal{C}'$ corresponds to a trail of odd length and $G'$ is a totally even immersion, the created loop corresponds to an odd circuit in $G$. We claim this is always possible, and after doing this for all non-loop edges of $\mathcal{C}'$ we have minimum degree at least $800t$. We call the creation of a single loop a \emph{round}, and we now prove by induction on the number of rounds that we always have minimum degree at least $800t$. This implies that the common neighborhood of every pair of vertices is always non-empty since $|S'|=1300t$.
    
    Assume that in this round and every prior round, every vertex has minimum degree at least $800t$. Let $u$ and $v$ be the ends of an edge in $\mathcal{C}'$ which is turned into a loop in the next round. We first claim that there exists a vertex in the common neighborhood of $u$ and $v$ with degree at least $820t+2$. Since $u$ and $v$ have degree at least $800t$ by the inductive assumption and since $|S'| = 1300t$, the common neighborhood of $u$ and $v$ has size at least $300t$. If every vertex in the common neighborhood of $u$ and $v$ has degree at most $820t+1$, then at least $(30t-1)(300t)/2=4500t^2-150t$ edges have been removed $G'[S']$. However, in each round we remove two edges, and there are at most $1500t^2$ rounds, so at most $3000t^2$ edges have been removed, a contradiction. It follows that if $w$ is a vertex in the common neighborhood of $u$ and $v$ with largest degree, then $w$ still has degree at least $820t$ even after we split off the edges $uw$ and $wv$.

    It just remains to consider the degree of $u$ and $v$. By symmetry between $u$ and $v$, it suffices to consider the degree of $v$. If $v$ has degree at least $800t+1$, then we are done. Consider the first round before this one where $v$ has degree at most $820t+1$. Note that $v$ has degree at least $820t$ in that round since the degree goes down by at most $2$ per round. As we argued in the previous paragraph, from that round until this one, $v$ is never chosen as the vertex $w'$ of largest degree in the common neighborhood of any pair $u'$ and $v'$ joined by an edge in $\mathcal{C}'$. Moreover, the degree of $v$ can be decreased while being the end of an edge in $\mathcal{C}'$ at most $20t-1$ times strictly before this round. So indeed, in the current round $v$ has degree at least $800t+1$, as desired.

    Let $G''$ be the subgraph of $G'$ obtained by restricting to the vertex-set $S'$ and then performing the algorithm described above. Thus $G''$ has minimum degree at least $800t$. We also now have that every element of $\mathcal{C}'$ is a loop corresponding to an odd circuit in $G$, every loop is on a vertex in $S'$, and there are at most $20t$ loops at each vertex in $S'$.

    Let $A \subseteq S'$ be the $42t$ vertices with the largest amount of loops. We claim that each vertex in $A$ has at least $t/2$ loops. Otherwise, the number of loops is at most 
    $$(42t)(20t)+(1300t-42t)(t/2)=1469t^2<1500t^2=|\mathcal{C}'|,$$
    a contradiction. We now consider the bipartite graph $H$ which is the subgraph of $G''$ obtained by only keeping edges between $A$ and $S' \setminus A$. Note that $|E(H)| \geq (800t)(42t) - \binom{42t}{2} \geq 32500t^2$. Thus $H$ has average degree at least $(2 \cdot 32500t^2)/(1300t) = 50t$. We then find a subgraph $H'$ of $H$ of minimum degree at least $25t$, and let $A',B$ be the bipartition of $H'$ with $A' \subseteq A$. Note that $|B| \geq 25t$.

    We now show that $H'$ contains a $K_t$-immersion with branch vertices contained in $A'$. Consider ordering $B = \{v_1, \ldots, v_{\ell}\}$. We do the following for each $v_i$ in order from $i=1$ to $i=\ell$. If the subgraph induced on $N(v_i)$ contains an antimatching of size at least $12t$, we split off edges at $v_i$ to create the matching. An \emph{antimatching} of size $k$ is a matching in the complement graph. If there is no antimatching of size $12t$, then there exists a clique of size at least $t$ in $N(v_i)$ by greedily constructing an antimatching. To see this, note that for any maximal antimatching in the subgraph induced on $N(v_i)$, the remaining vertices in $N(v_i)$ form a clique. Furthermore, there are at least $25t-2(12t)=t$ vertices remaining. If there is always the desired antimatching, then after doing this for every vertex in $B$ we have added $12t\ell \geq (12t)(25t) = 300t^2$ edges between vertices in $A'$. Restrict to the graph induced on $A'$, and note that because $|A'| \leq 42t$ and $t \geq 49$, this graph has average degree at least $(2 \cdot 300t^2)/(42t)=14t+2t/7\geq 14t + 14$. We then construct a subgraph of minimum degree at least $7t + 7$, which contains a $K_t$-immersion by \cref{thm:MinDegImmersion}. Note that in both cases, we have obtained a totally even $K_t$-immersion of $G$ with branch vertices contained in $A'$ and with branch trails edge-disjoint from the odd circuits of $G$ corresponding to the $\lceil t/2 \rceil$ loops on each vertex in $A'$.

    We now obtain a totally odd $K_t$-immersion as follows. First we orient the edges of $K_t$ such that each vertex has in degree and out degree at most $\lceil t/2 \rceil$. This orientation can be obtained by first removing a perfect matching if $t$ is even, and then orienting each cycle in a cycle decomposition of the remaining Eulerian graph. We then let each vertex use one loop for each outgoing edge of the orientation to make it an odd trail, resulting in a totally odd $K_t$-immersion. This completes the proof of \cref{thm:makingImmersionOdd}.
\end{proof}

We note that the above construction can be used to find a $K_t$-immersion in which the branch trails have any prescribed parity. Thus we obtain the same bounds as in \cref{thm:MainColoringResult} and \cref{thm:UnionOfBipAndNoImmersion} for graphs which forbid any $K_t$-immersion with a fixed parity of its branch trails.

\section{Forbidding a totally odd $K_t$-immersion}

In this section we prove \cref{thm:UnionOfBipAndNoImmersion} by applying \cref{thm:makingImmersionOdd} and \cref{cor:OddCircuitsEP}. We show that in a maximum bipartite set of edges, the remaining graph cannot have a large clique immersion else we can separate off a bipartite piece of the graph and grow the number of bipartite edges.

\begin{proposition}\label{prop:ImmersionGivesEdgeCut}
    Let $t \geq 49$ and let $G$ be a 2-edge-connected graph with no totally odd $K_t$-immersion. Suppose that $S \subseteq V(G)$ is the branch vertices of a totally even $K_{1400t}$-immersion. Then there exists a set $X \subseteq E(G)$ of size at most $14000t^2$ such that there exists a bipartite component $C$ of $G \setminus X$ containing over half of the vertices in $S$.
\end{proposition}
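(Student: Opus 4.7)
The plan is to apply \cref{cor:OddCircuitsEP} to the same auxiliary graph as in \cref{thm:makingImmersionOdd}. First, I would construct $G^+$ from $G$ by adding a new vertex $x$ adjacent to every $v \in S$ with multiplicity $20t$, and apply \cref{cor:OddCircuitsEP} at $x$ with $k = 3500t^2$. If the first alternative held, \cref{thm:makingImmersionOdd} would produce a totally odd $K_t$-immersion in $G$, contradicting our hypothesis; so the second alternative yields an edge set $X^+ \subseteq E(G^+)$ with $|X^+| < 7000t^2$ such that every $x$-circuit in $H := G^+ \setminus X^+$ has even length. By the remark following \cref{cor:OddCircuitsEP}, the edge block $B$ of $x$ in $H$ is bipartite.

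Next I would show $|S \cap V(B)| > |S|/2 = 700t$. A vertex $v \in S$ lies in $V(B)$ as soon as at least two of its $20t$ parallel $xv$-edges survive in $H$, because two surviving parallel edges form a 2-cycle placing $v$ in the same 2-edge-connected block as $x$; the number of $v \in S$ failing this is at most $|X^+|/(20t - 1) < 700t$ for $t \geq 1$. Setting $X_0 := X^+ \cap E(G)$ (of size less than $7000t^2$) and $U := V(B) \setminus \{x\}$, any edge of $G \setminus X_0$ with both endpoints in $U$ is an edge of $H$ joining two vertices of $V(B)$. Since distinct edge blocks of $H$ share at most one vertex, this edge must itself lie in $B$, so the subgraph of $G \setminus X_0$ induced on $U$ is a bipartite subgraph of $B$.

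The final step, and the main obstacle, is to augment $X_0$ by a set $Y \subseteq E(G)$ of at most $7000t^2$ edges so that the component of $G \setminus (X_0 \cup Y)$ containing the vertices of $U \cap S$ lies entirely in $U$ (or in a slight bipartite enlargement of $U$), and hence is bipartite. I would take $Y$ to consist of boundary edges of $U$ in $G \setminus X_0$. Structurally, each such boundary edge is either a bridge of $H$ in $E(G)$, or an edge of an edge block of $H$ distinct from $B$ that meets $B$ at a single cut vertex. Boundary edges of the second type can be absorbed by iteratively enlarging $U$ to the vertex set of the maximal connected bipartite subgraph of $H$ obtained by attaching further edge blocks through shared cut vertices; an inductive two-edge-disjoint-path argument (the same one that shows $B$ is bipartite) shows that each attached block is bipartite and the enlarged region stays bipartite.

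After this enlargement, only bridges of $H$ lying in $E(G)$ remain as boundary edges, and the hard part is to bound their number by $7000t^2$. Here I would use the 2-edge-connectivity of $G$: each such bridge is not a bridge in $G$, so some cycle in $G$ through it must contain an edge of $X^+$. A careful charging of these bridges to distinct elements of $X^+$, using the bridge-tree structure of $H$ together with properties of 2-edge-cuts in $G$ (a naive charging fails because one edge of $X^+$ may lie in cycles through many bridges), should give the required bound. This yields $X := X_0 \cup Y$ of size at most $14{,}000t^2$ and a bipartite component of $G \setminus X$ containing more than half of the vertices of $S$.
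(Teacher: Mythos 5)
Your setup is correct and matches the paper's: build the auxiliary graph $G^+$ with $x$ joined to each $v\in S$ with multiplicity $20t$, invoke \cref{thm:makingImmersionOdd} to rule out $3500t^2$ odd $x$-circuits, apply \cref{cor:OddCircuitsEP} to get $X^+$ with $|X^+|\le 7000t^2$, and observe that the edge block $B$ of $x$ in $G^+\setminus X^+$ is bipartite and contains most of $S$. However, the core of your argument has a genuine gap that the paper's proof specifically works to avoid: you want the bipartite region $U = V(B)\setminus\{x\}$ to become a single component of $G\setminus X$ after deleting boundary edges, but $U$ need not be connected in $G\setminus X_0$. Since $B$ is 2-edge-connected in $G^+\setminus X^+$ only through $x$, deleting $x$ can split $B$ into many pieces, and nothing in your argument forces one piece to carry more than $700t$ vertices of $S$. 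The paper resolves exactly this by not staying with the edge block of $G^+\setminus X^+$: it passes to a set $S'$ of $1000t$ vertices 2-edge-connected to $x$, observes $S'$ spans a $K_{1000t}$-immersion in $G$, and then runs a counting argument (using that the branch trails are edge-disjoint, so at most $|S'|-1$ of them can cross a bridge of $G\setminus X$, and counting trails inside edge blocks via $\sum_B \binom{|B\cap S'|}{2}$) to extract a single edge block of $G\setminus X$ containing at least $900t$ of those vertices. This counting step is the missing idea.

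On the boundary-bounding step, you correctly identify the obstruction but then go looking for the wrong kind of charging. The cycle-based charging you sketch really does fail, as you note, but the right observation is component-based: once $C$ is an edge block of $G\setminus X$, every component $D$ of $(G\setminus X)\setminus C$ sends at most one edge into $C$ (two such edges would lie on a common cycle and pull $D$ into the block); 2-edge-connectivity of $G$ then forces a second edge of $\delta_G(D)$, which must lie in $X\setminus E(C)$, and each edge of $X$ can serve this role for at most two components. That immediately gives $|(E(C)\cap X)\cup\delta(C)|\le 2|X|$, and crucially one takes the final deletion set to be $(E(C)\cap X)\cup\delta(C)$ rather than all of $X_0$ plus the boundary (your choice would only give a $3|X_0|$ bound). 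Finally, your detour through "absorbing attached edge blocks at cut vertices" is moot under the paper's definition: edge blocks are maximal bridgeless subgraphs and hence pairwise vertex-disjoint, so no such attached blocks exist; all boundary edges are bridges.
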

\begin{proof}
    We first construct a graph $G'$ by adding a vertex $x$ adjacent to every vertex in $S$ with multiplicity $20t$. By \cref{thm:makingImmersionOdd} there are not $3500t^2$ odd $x$-circuits in $G'$, so by \cref{cor:OddCircuitsEP}, there exists a set $X \subseteq E(G')$ of size at most $7000t^2$ such that $G' \setminus X$ has no odd $x$-circuit, which implies that the edge block of $G' \setminus X$ containing $x$ is bipartite. We first note that there are at least $1000t$ vertices in $S$ which are 2-edge-connected to $x$ in $G' \setminus X$, since otherwise $X$ has size at least $(20t-1)(400t) > 7000t^2$. Let $S' \subseteq S$ be a set of $1000t$ vertices which are each 2-edge-connected to $x$, and note that $S'$ is the vertex set of a $K_{1000t}$-immersion in $G$. Also note that every edge block in $G$ that contains a vertex in $S'$ is bipartite.
    
    We now claim that there exists an edge block in $G \setminus X$ that contains at least $900t$ of the vertices in $S'$. We suppose towards a contradiction that there is no edge block containing at least $900t$ of the vertices in $S'$ and count the branch trails of the $K_{1000t}$ immersion which are disjoint from $X$. Note that at most $|S'|-1$ of these branch trails contain a bridge of $G\setminus X$.Thus the maximum number of branch trails which are disjoint from $X$ is
    $$\sum_{B\text{ edge block}} \binom{|B \cap S'|}{2} + |S'| - 1.$$
    Because the size of $S'$ is fixed at $1000t$, the above expression is maximized when $|B \cap S'|$ is as large as possible. If there is no edge block containing $900t$ vertices in $S'$, then the maximum number of branch trails disjoint from $X$ is at most
    $$\binom{900t}{2} + \binom{100t}{2} + 1000t \leq 420000t^2.$$
    This implies that at least $\binom{1000t}{2} - 420000t^2$ edges were deleted, which is much larger than the allowed $7000t^2$.

    Therefore there exists a set of edges $X \subseteq E(G)$ of size at most $7000t^2$ such that there is a bipartite edge block in $G \setminus X$ containing at least $900t$ vertices from $S$. We let $C$ be the vertex set of this edge block. We now claim that $|(E(C) \cap X) \cup \delta(C)| \leq 2|X|$, and hence the desired result follows. Consider the components of the graph obtained from $G\setminus X$ by deleting the vertices in $C$. Exactly $|\delta(C)\setminus X|$-many of these components have an edge to $C$ in $G\setminus X$. Because $G$ is 2-edge-connected, there must be at least one edge of $X \setminus E(C)$ out of each such component. If an edge in $X$ is counted for two such components, then that edge is not in $\delta(C)$, so the bound follows.
\end{proof}

We now use the above proposition to grow a bipartite set of edges in a graph forbidding a totally odd $K_t$-immersion.

\begin{proposition}\label{prop:GrowingBipartite}
    Let $G$ be a graph which does not admit a totally odd $K_t$-immersion. Let $F \subseteq E(G)$ such that $G[F]$ is bipartite. Then either
    \begin{enumerate}
        \item there exists a set of edges $F'$ such that $G[F']$ is bipartite and $|F'| > |F|$, or
        \item $G \setminus F$ does not contain a $K_{98{\small,}000t + 4{\small,}410{\small,}071}$-immersion.
    \end{enumerate}
\end{proposition}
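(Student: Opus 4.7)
The plan is to prove the contrapositive: assuming $G\setminus F$ contains a $K_{98{,}000t+4{,}410{,}071}$-immersion, I will construct a bipartite edge set $F'\subseteq E(G)$ with $|F'|>|F|$. The constant factors as $98{,}000t+4{,}410{,}071=70\cdot 1400(t+45)+71$, so \cref{lem:MakingTotallyEven}, applied in $G\setminus F$ (which forbids a totally odd $K_{t+45}$-immersion since $G$ does and $t+45>t$), yields a totally even $K_{1400(t+45)}$-immersion inside $G\setminus F$. The parameter inflation from $t$ to $t+45$ is what will provide the quadratic margin needed in the final counting step.

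Next, I observe that the branch vertices of any $K_N$-immersion with $N\geq 3$ must all lie in a single edge block of the host graph: if a bridge separated two branch vertices from a third, any two edge-disjoint trails to the third would both reuse the bridge. Hence the totally even $K_{1400(t+45)}$-immersion lives inside some edge block $B\subseteq G\setminus F$, which is 2-edge-connected and inherits from $G$ the property of forbidding a totally odd $K_{t+45}$-immersion. Applying \cref{prop:ImmersionGivesEdgeCut} to $B$ with parameter $t+45$, I obtain $X\subseteq E(B)$ with $|X|\leq 14{,}000(t+45)^2$ and a bipartite component $C$ of $B\setminus X$ whose vertex set meets the branch vertex set $S$ in more than $700(t+45)$ vertices.

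I build $F'$ via a switching argument. Let $\chi:V(G)\to\{0,1\}$ be a 2-coloring witnessing that $G[F]$ is bipartite, and let $\chi_C:V(C)\to\{0,1\}$ be a 2-coloring witnessing that $C$ is bipartite. Define $\chi'$ by $\chi'|_{V(G)\setminus V(C)}=\chi$ and $\chi'|_{V(C)}=\chi_C$, selecting the orientation of $\chi_C$ so as to match $\chi$ at the maximum number of boundary vertices of $V(C)$. Let $F'$ be the set of edges of $G$ bichromatic under $\chi'$; then $G[F']$ is bipartite by construction. Writing $F_C=F\cap E(G[V(C)])$ and $p$ for the number of $\chi_C$-bichromatic edges of $G[V(C)]$, the size difference decomposes as
\[
|F'|-|F| \;=\; \bigl(p-|F_C|\bigr)\;+\;\bigl(q-|F\cap\delta(V(C))|\bigr),
\]
where $q$ is the number of boundary edges of $V(C)$ bichromatic under $\chi'$, and $q\geq|\delta(V(C))|/2$ by the optimal choice of orientation.

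The main obstacle is showing this difference is strictly positive. The gain $p\geq|E(C)|$ is bounded below by an edge-disjointness count analogous to that in the proof of \cref{prop:ImmersionGivesEdgeCut}: all edges of $B$ from $V(C)$ to $V(B)\setminus V(C)$ lie in $X$, so each branch trail between two branch vertices of $V(C)$ that ventures outside $V(C)$ must consume at least two edges of $\delta_B(V(C))\subseteq X$, bounding the number of such escaping trails by $|X|/2$; an analogous count on trails between $V(C)\cap S$ and $V(B)\setminus V(C)$ forces $|V(C)\cap S|$ to be nearly all of $S$, giving $|E(C)|=\Omega((t+45)^2)$ with a large constant. To control the losses, I may assume that $F$ is a maximum bipartite edge set (otherwise conclusion~(1) holds trivially), so the local max-cut inequalities $|\delta(U)\cap F|\geq|\delta(U)\cap(G\setminus F)|$ hold for every $U\subseteq V(G)$; applying these to $U=A_C$, $B_C$, and $V(C)$ (where $(A_C,B_C)$ is the bipartition of $V(C)$ given by $\chi_C$) controls $|F_C|$ and $|F\cap\delta(V(C))|$ sufficiently that the quadratic gain afforded by the $+45$ slack strictly dominates the sub-quadratic losses, yielding $|F'|>|F|$ and completing the contrapositive.
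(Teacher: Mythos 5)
There is a genuine gap, and it stems from applying \cref{prop:ImmersionGivesEdgeCut} to the wrong graph. You apply it to an edge block $B$ of $G\setminus F$, so the conclusion hands you a bipartite component $C$ of $B\setminus X$ — that is, the edges of $G\setminus F$ inside $V(C)$ (minus $X$) are bipartite under $\chi_C$. But $G[V(C)]$ also contains all of $F_C=F\cap E(G[V(C)])$, and nothing in $X$ controls these: $X\subseteq E(B)\subseteq E(G\setminus F)$. The local max-cut inequalities you invoke cannot rescue this, because they only bound $F$-edges \emph{crossing} a cut $\delta(U)$ against non-$F$-edges crossing it; they say nothing about the number of $F$-edges lying entirely inside $A_C$ or inside $B_C$, which is precisely the loss term $|F_C|-p+|E(C)|$ in your decomposition. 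That loss scales with $|V(C)|$ and the density of $F$ within $V(C)$, neither of which is bounded by any function of $t$, so the claim that the losses are ``sub-quadratic'' is unsubstantiated and in general false. There is also a structural obstruction to the whole switching route: your $F'$ is, by definition, the bichromatic edge set of a $2$-coloring $\chi'$, hence a cut of $G$. Once you have reduced to the case that $F$ is a maximum bipartite edge set (equivalently, a maximum cut), \emph{every} cut $F'$ satisfies $|F'|\le|F|$ — so no correct accounting of the switching can yield $|F'|>|F|$, and the attempted contradiction is unreachable by this construction.

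The paper avoids all of this by taking the edge block in $G$ rather than in $G\setminus F$ (so one may assume $G$ is $2$-edge-connected) and applying \cref{prop:ImmersionGivesEdgeCut} to $G$ itself. Then $C$ is a bipartite component of $G\setminus X$, so \emph{every} edge of $G$ with both ends in $V(C)$, including the $F$-edges, is either in $E(C)$ or in the small set $X$, and moreover removing $X$ disconnects $V(C)$ from the rest of $G$. This allows the non-cut construction $F'=(F\cup E(C))\setminus X$: it is bipartite because $G[F']$ decomposes into the bipartite $C\setminus X$ and a subgraph of $G[F]$ with no edges between them, and $|F'|-|F|\ge|E(C)\setminus F|-2|X|\ge\binom{700t'}{2}-3|X|>0$ by simple counting. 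No max-cut assumption or switching is needed. Your edge-disjointness count giving $|E(C)|=\Omega((t')^2)$ is in the right spirit — it is exactly the paper's count — but it has to be paired with this choice of host graph for the proposition, not $G\setminus F$.
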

\begin{proof}
    Clearly we may assume $t \geq 4$ else $G$ is bipartite. We may assume $G$ is connected, and in fact we may assume $G$ is 2-edge-connected because an immersion of a clique with at least three vertices must be entirely contained in a single edge block. Let $t' = t + 45$, so $t' \geq 49$ and $G$ forbids a totally odd $K_{t'}$-immersion. Suppose that $G \setminus F$ contains a $K_{98{\small,}000t + 4{\small,}410{\small,}071}$-immersion. Then by \cref{lem:MakingTotallyEven}, $G \setminus F$ contains a totally even $K_{1400t'}$-immersion. By \cref{prop:ImmersionGivesEdgeCut}, there exists a set $X \subseteq E(G)$ of size at most $14000(t')^2$ such that there exists a bipartite component $C$ of $G \setminus X$ containing over half of the branch vertices of the $K_{1400t'}$-immersion. We set $F' = (F \cup E(C)) \setminus X$. Note that $G[F']$ is bipartite. We claim that $|F'| > |F|$. Note that $E(C) \setminus F$ must contain at least $\binom{700t'}{2} - |X|$ edges from the branch trails of the $K_{1400t'}$-immersion. Thus
    $$|F'| - |F| \geq \binom{700t'}{2} - 2|X| \geq \binom{700t'}{2} - 28000(t')^2 > 0$$
    as desired.
\end{proof}

\cref{prop:GrowingBipartite} immediately implies \cref{thm:UnionOfBipAndNoImmersion} by setting $F$ to be a maximum size set of edges such that $G[F]$ is bipartite. This gives \cref{thm:MainColoringResult} as an immediate corollary. We restate \cref{thm:MainColoringResult} below for convenience.
\MainColoringTheorem*
\begin{proof}
    Let $G$ be a graph not admitting a totally odd $K_t$-immersion. By \cref{thm:UnionOfBipAndNoImmersion}, we can write $G = G_1 \cup G_2$ where $G_1$ is bipartite and $G_2$ forbids a $K_{98{\small,}000t + 4{\small,}410{\small,}071}$-immersion. Then we can color $G_1$ with two colors, and by \cref{thm:coloringNoImmersion}, we can color $G_2$ with $346{\small,}920t + 15{\small,}611{\small,}655$ colors. Taking the product of these two colorings we obtain a proper coloring of $G$ with at most $700{\small,}000t + 32{\small,}000{\small,}000$ colors.
\end{proof}

\section{Algorithms}

Our proof of \cref{thm:UnionOfBipAndNoImmersion} can be made into an algorithm for finding such a decomposition, giving \cref{thm:DecompositionAlgorithm}. A crucial piece of our algorithm is a fixed-parameter tractable algorithm for immersion testing due to Grohe, Kawarabayashi, Marx, and Wollan~\cite{Grohe2011Finding}.

\begin{theorem}[{\cite[Corollary~1.2]{Grohe2011Finding}}]\label{thm:ImmersionTesting}
    There exists an algorithm that takes as input simple graphs $G$ and $H$ and decides whether $G$ contains $H$ as an immersion in $\mathcal{O}_{H}(|V(G)|^3)$ time.
\end{theorem}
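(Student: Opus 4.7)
The plan is to follow the Robertson--Seymour paradigm developed for minor testing, with the essential modification that immersions are governed by edge-connectivity rather than vertex-connectivity, so we work with \emph{tree-cut-width} (a graph parameter introduced by Wollan) in place of treewidth. The overall scheme is a dichotomy: either $G$ has tree-cut-width bounded by some function of $H$, in which case immersion containment is decided by a bottom-up dynamic program on a tree-cut decomposition, or $G$ has large tree-cut-width, in which case we find an \emph{irrelevant edge} whose deletion preserves the existence or non-existence of an $H$-immersion, delete it, and iterate.

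For the bounded-width case, I would first compute a tree-cut decomposition of $G$ of width $\mathcal{O}_H(1)$ in time $\mathcal{O}_H(|V(G)|^2)$ using an approximation algorithm for tree-cut-width. I would then run a dynamic program on the decomposition whose state at each adhesion records, for every possible ``interface'' of a hypothetical $H$-immersion with that adhesion, whether a valid partial immersion of that type exists below the node. Concretely, an interface specifies which vertices of $H$ have been realized in the subtree, which branch trails of $H$ are still open, and for each open trail, how many edge-disjoint loose ends cross the adhesion. Since the adhesion has size $\mathcal{O}_H(1)$ and $|V(H)|$ is a constant, the number of interfaces is $\mathcal{O}_H(1)$, and the DP runs bottom-up in linear time in $|V(G)|$.

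For the large-tree-cut-width case, I would invoke Wollan's excluded-wall theorem for immersions: graphs of tree-cut-width at least $f(r)$ admit an immersion of an $r$-wall. Choosing $r$ much larger than $|V(H)| + |E(H)|$, I would argue that some edge $e$ deep inside the wall is \emph{irrelevant}: for any potential $H$-immersion using $e$, the branch trails can be rerouted edge-disjointly through the surrounding wall to avoid $e$, hence $G$ admits an $H$-immersion if and only if $G \setminus e$ does. Delete $e$ and recurse on a graph with one fewer edge. Since the number of edge deletions before reaching the bounded-width regime is $\mathcal{O}_H(|V(G)|^2)$ and each iteration locates an irrelevant edge in $\mathcal{O}_H(|V(G)|)$ time with amortized reconstruction of the wall, the total running time is $\mathcal{O}_H(|V(G)|^3)$.

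The main obstacle is the irrelevant edge theorem itself. The argument requires an edge-disjoint linkage statement: given a sufficiently large wall-immersion and $k = \mathcal{O}_H(1)$ pairs of terminals on its boundary, one can find $k$ pairwise edge-disjoint trails realizing the linkage and avoiding any prescribed internal edge. Proving this for walls and immersions, rather than cliques and minors, requires adapting Robertson--Seymour's $k$-disjoint-paths theorem to the edge-disjoint setting and handling the fact that branch trails of an immersion may revisit vertices. This is precisely the technical core of the Grohe--Kawarabayashi--Marx--Wollan algorithm; the cubic bound emerges by combining polynomial-time wall detection and linkage with the observation that, after each irrelevant-edge deletion, only a constant-size region of the current tree-cut decomposition must be rebuilt.
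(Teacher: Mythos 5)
The paper does not prove this statement; it cites it verbatim from Grohe, Kawarabayashi, Marx, and Wollan~\cite{Grohe2011Finding}, so there is no in-paper argument to compare against. Your proposal is a reconstruction of that external result, and it diverges from the actual GKMW strategy at the outset: you organize the dichotomy around \emph{tree-cut-width}, but that parameter was only introduced by Wollan in 2015, four years after the GKMW paper appeared. GKMW work with ordinary treewidth; their main theorem is the $\mathcal{O}_H(|V(G)|^3)$ algorithm for \emph{topological subgraph} testing, with the immersion algorithm obtained as a corollary, and the core of the proof is a new structure theorem for large-treewidth graphs with no topological $H$-subgraph together with an irrelevant-\emph{vertex} argument of considerable depth --- substantially more than a ``find a wall, delete a deep edge'' loop.

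Even taken on its own terms, your sketch leaves every load-bearing step unestablished. The dynamic program over tree-cut decompositions is not routine: the adhesions are edge cuts rather than vertex separators and the torsos are multigraphs, so ``the number of interfaces is $\mathcal{O}_H(1)$'' presupposes both bounded adhesion and bounded torso size and a correct way to encode partial immersions across an edge cut, none of which you set up. You invoke an excluded-wall theorem for immersions without a quantitative bound or an efficient wall-finding subroutine. Most importantly, you correctly flag the irrelevant-edge theorem as ``the main obstacle'' and then assert that the branch trails ``can be rerouted edge-disjointly through the surrounding wall''; this is exactly the hard part, and for topological subgraphs GKMW had to contend with the fact that deep wall vertices are \emph{not} automatically irrelevant (a branch vertex of $H$ may be forced deep inside), so the immersion analogue requires comparable care rather than an appeal to intuition. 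Finally, the claimed cubic total hinges on ``amortized reconstruction'' of the decomposition across up to $\mathcal{O}_H(|V(G)|^2)$ deletions, which you do not justify. As written this is an outline of a plausible modern approach, not a proof, and it is not the approach taken in the cited source.
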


This can be made into an algorithm for finding an immersion. We will only need the case where $H$ is a clique.

\begin{corollary}\label{cor:FindingImmersionAlg}
    There exists an algorithm that takes as input a mulitgraph $G$ and an integer $t \geq 1$ and returns the branch vertices and branch trails of a $K_t$-immersion in $G$ if one exists in $\mathcal{O}_t(|V(G)|^4 + |E(G)|)$ time.
\end{corollary}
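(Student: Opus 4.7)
The plan is to combine the detection algorithm of \cref{thm:ImmersionTesting} with standard self-reducibility, preceded by two preprocessing steps that keep the instance small enough to fit the claimed runtime.

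The first preprocessing step is a multiplicity cap: reduce every pair with multiplicity greater than $\binom{t}{2}$ to multiplicity exactly $\binom{t}{2}$. This preserves the existence of a $K_t$-immersion because in any edge-minimum $K_t$-immersion, each of the $\binom{t}{2}$ branch trails uses at most one edge between a fixed pair of vertices: if a trail used two parallel edges $e_1, e_2$ between $u$ and $v$, then between these uses the trail contains a closed subwalk which can be excised together with one of $e_1, e_2$ to produce a shorter trail with the same endpoints and edge-disjointness guarantees. The pass takes $\mathcal{O}(|E(G)|)$ time. Next, I would compute the $(7t+7)$-core of the underlying simple graph $G_s$. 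If it is non-empty, then by \cref{thm:MinDegImmersion} the core contains a $K_t$-immersion, and I would invoke the constructive version of that theorem (extractable from the proof in~\cite{Gauthier2019Forcing}) to produce branch vertices and branch trails directly. Otherwise $G_s$ is $(7t+7)$-degenerate, so $|E(G_s)| \leq (7t+7)|V(G)|$, and after the multiplicity cap $|E(G)| = \mathcal{O}_t(|V(G)|)$.

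In the sparse case I would replace $G$ by its $1$-subdivision $G'$, a simple graph on $|V(G)| + |E(G)| = \mathcal{O}_t(|V(G)|)$ vertices whose $K_t$-immersions correspond bijectively to those of $G$ for $t \geq 4$: subdivision vertices have degree two in $G'$ and therefore cannot host branch vertices, while a trail through a subdivision vertex is forced to use both of its incident edges (the cases $t \leq 3$ are trivial). Applying \cref{thm:ImmersionTesting} to $G'$ gives detection in $\mathcal{O}_t(|V(G)|^3)$ time, and iterating over each of the $\mathcal{O}_t(|V(G)|)$ edges of $G'$---deleting the edge permanently if detection still succeeds on the remaining graph---produces an edge-minimal subgraph $G^*$ in total time $\mathcal{O}_t(|V(G)|^4)$. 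Because every $K_t$-immersion of $G^*$ must use every edge, fixing any such immersion partitions $E(G^*)$ into exactly $\binom{t}{2}$ branch trails.

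Finally, I would make this partition explicit. A degree count in $G^*$ shows that every non-branch vertex has even degree while every branch vertex has degree at least $t-1$ of parity $t-1 \pmod 2$, so for even $t$ the branch vertices are exactly the odd-degree vertices of $G^*$; for odd $t$ one more round of self-reducibility (for instance, testing for each $v \in V(G^*)$ whether the graph obtained by attaching a new vertex to $v$ by $t-1$ parallel edges contains a $K_{t+1}$-immersion) picks them out within the same time budget. With the branch vertices in hand, an Eulerian-style decomposition anchored at the identified endpoints yields the branch trails, and we translate the result back from $G'$ to $G$ by suppressing the degree-two subdivision vertices. The main obstacle I foresee is confirming that the proof of \cref{thm:MinDegImmersion} in~\cite{Gauthier2019Forcing} can be implemented in time $\mathcal{O}_t(|V(G)|^4)$ to handle the dense case; once that is in place, the rest of the algorithm follows a standard detection-plus-self-reducibility template.
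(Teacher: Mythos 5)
Your opening moves mirror the paper: cap multiplicities at $\binom{t}{2}$, reduce to a sparse multigraph with $|E(G)| = \mathcal{O}_t(|V(G)|)$, pass to the $1$-subdivision to get a simple graph, and apply \cref{thm:ImmersionTesting}. After that the two arguments diverge, and yours has two gaps.

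First, the dense case. You propose computing the $(7t+7)$-core and, when it is non-empty, invoking a ``constructive version'' of \cref{thm:MinDegImmersion} --- and you correctly flag this as the main obstacle, since nothing in the paper supplies such an algorithm or its running time. The paper sidesteps this entirely: it uses \cref{thm:MinDegImmersion} only as a \emph{guarantee} that one may safely keep deleting arbitrary edges while $|E(G)| > \binom{t}{2}(7t+7)|V(G)|$ without destroying every $K_t$-immersion, and then runs a single uniform algorithm on the resulting sparse multigraph. So the dependence on a constructive degree theorem is avoidable and you should not lean on it.

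Second, and more seriously, the extraction step. Passing to an edge-minimal subgraph $G^*$ and identifying branch vertices by degree parity does not finish the job. For even $t$ the parity criterion does identify the branch vertices, but knowing \emph{which} vertices are branch vertices does not determine how the remaining edges pair up into transitions at each internal vertex; an arbitrary Eulerian-style trail decomposition respecting the endpoints need not produce trails between the correct $\binom{t}{2}$ \emph{pairs} of branch vertices, so it need not be a $K_t$-immersion at all. Moreover, for odd $t$ your proposed test --- attach a pendant vertex $u$ to $v$ by $t-1$ parallel edges and ask for a $K_{t+1}$-immersion --- cannot succeed: $u$ would have degree $t-1 < t$ and so can never be a branch vertex of $K_{t+1}$, making the test vacuously false for every $v$. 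The paper instead avoids both problems by continuing the self-reducibility one level deeper: it repeatedly selects a low-degree vertex $v$ (degree $\mathcal{O}_t(1)$, so there are $\mathcal{O}_t(1)$ candidate operations), deletes incident edges and splits off pairs at $v$ whenever the immersion test still succeeds, and thereby reduces $v$ to either an isolated vertex (deleted) or a degree-$(t-1)$ branch vertex; iterating until the whole graph is a $K_t$ reads off the branch trails directly from the recorded concatenations. Your argument could likely be repaired by replacing the ``Eulerian-style decomposition'' sentence with exactly this kind of splitting-off self-reducibility on $G^*$, using the $\mathcal{O}_t(1)$ degree bound that edge-minimality gives you, but as written the extraction is not justified.
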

\begin{proof}
    We assume that $t \geq 3$ else we can return some vertex or edge of $G$. We first note that if $G$ is a multigraph with $|E(G)| = \mathcal{O}_t(|V(G)|)$, then we can test for a $K_t$-immersion in $\mathcal{O}_t(|V(G)|^3)$ time. To see this, consider subdividing every edge to create a simple graph. Because $t \geq 3$, a $K_t$-immersion in this simple graph can easily be translated to an immersion in $G$. Hence we can apply \cref{thm:ImmersionTesting}.

    The algorithm is as follows. First, we delete edges from $G$ until there are no loops and every edge has multiplicity at most $\binom{t}{2}$. We then delete edges until $|E(G)|$ is at most $\binom{t}{2}(7t+7)|V(G)|$. We test whether $G$ contains a $K_t$-immersion, and if not then we are done.

    We now find the $K_t$-immersion in $G$. We will iteratively split off and delete edges. At each step, an edge stores a trail in the original graph. When we split off two edges, we concatenate their corresponding trails.

    We iterate through $V(G)$ and find all vertices of degree at most $\binom{t}{2}(8t+7)$. Call the set of such vertices $S \subseteq V(G)$, and initialize a set of branch vertices $B = \varnothing$. We maintain that every $K_t$-immersion in $G$ has $B$ as a subset of its branch vertices, and every vertex in $B$ has degree $t-1$.

    We now repeat the following while $V(G) \neq B$. If $S \setminus B$ is empty, then we delete an arbitrary edge of $G$. We add one or both of the ends of this edge to $S$ if its degree becomes at most $\binom{t}{2}(8t+7)$. Else if there exists a vertex $ v \in S \setminus B$, we delete every edge incident to $v$ while doing so does not destroy the $K_t$-immersion, possibly adding the other end to $S$. Then, for each pair of edges incident to $v$, we split them off when doing so does not destroy the $K_t$-immersion. If $v$ becomes isolated then we delete $v$, otherwise we add $v$ to $B$.

    If $V(G) = B$, then $G$ is isomorphic to $K_t$, and we read off the branch vertices and branch trails from the vertices and edges of $G$.

    We now prove the correctness of the above algorithm and analyze its runtime. If $G$ contains a $K_t$-immersion, then we may assume that all branch trails are paths. Thus deleting loops and reducing the multiplicity of each edge to $\binom{t}{2}$ does not change whether $G$ contains a $K_t$-immersion. We then claim that if $G$ contains at least $\binom{t}{2}(7t+7)|V(G)|$ edges, then $G$ has a $K_t$-immersion. To see this, consider deleting all but one edge from each parallel class; the resulting subgraph still has $(7t+7)|V(G)|$ many edges. We can then find a subgraph of minimum degree $7t+7$ and apply \cref{thm:MinDegImmersion}. Thus the first preprocessing step does not change whether $G$ has a $K_t$-immersion, and afterwards we have $|E(G)| = \mathcal{O}_t(|V(G)|)$. This step can be done in $\mathcal{O}_t(|E(G)|)$ time.

    We now consider the main loop of the algorithm. First note that each time we delete an edge, we possibly add its endpoints to $S$. Thus $S$ need only be computed in $\mathcal{O}_t(|V(G)|)$ time once. We maintain that every edge has multiplicity at most $\binom{t}{2}$ throughout the algorithm. To see this, note that if splitting off a pair of edges created an edge of multiplicity larger than $\binom{t}{2}$, then those edges would have been deleted instead.
    
    Consider when $S \setminus B$ is empty. Because each edge has multiplicity at most $\binom{t}{2}$ and $|B| \leq t$, $G\setminus B$ has minimum degree greater than $\binom{t}{2}(7t+7)$. This implies $B = \varnothing$, otherwise $G$ has a $K_t$-immersion not using a vertex in $B$ by deleting all but one edge in each parallel class and applying \cref{thm:MinDegImmersion}, contradicting that the vertices in $B$ are the branch vertices of every $K_t$-immersion in $G$. Thus after deleting an arbitrary edge, $G$ still has minimum degree at least $\binom{t}{2}(7t+7)$ implying that $G$ contains a $K_t$-immersion.

    Suppose $S \setminus B$ is not empty, and let $v$ be the chosen vertex in $S \setminus B$. We can test every way of deleting an edge incident to $v$ and splitting off a pair of edges at $v$ in $\mathcal{O}_t(|V(G)|^3)$ time because $v$ has degree $\mathcal{O}_t(1)$. If $v$ does not become isolated, then every way of deleting an edge incident to $v$ and splitting off a pair of edges at $v$ must destroy every $K_t$-immersion in $G$. Hence $v$ is contained in the branch set of every $K_t$-immersion in $G$ and has degree exactly $t-1$, implying that $B$ maintains the desired properties.

    Because every vertex eventually appears in $S$, the total runtime of this loop is $\mathcal{O}_t(|V(G)|^4)$. Together with the preprocessing phase of deleting edges, we get the desired runtime.
\end{proof}

We now prove \cref{thm:DecompositionAlgorithm}, which we restate below for convenience.

\DecompositionAlgorithmThm*
\begin{proof}
     We first note that one of the options in \cref{lem:OddPathsEP} can be found in $\mathcal{O}_k(|V(G)||E(G)|)$ time for simple graphs $G$ as discussed in \cite[Section~4]{Geelen2009oddMinor} and using the $\mathcal{O}(\sqrt{|V(G)|}|E(G)|)$ maximum matching algorithm of Micali and Vazirani~\cite{MicaliV1980algorithmfor}. This implies that we can find either outcome of \cref{cor:OddCircuitsEP} in $\mathcal{O}_k(|E(G)|^3)$ time via applying \cref{lem:OddPathsEP} to the $1$-subdivision of the line graph as in the proof of \cref{cor:OddCircuitsEP}.
     
    We can find the set $X \subseteq E(G)$ and component $C$ of $G \setminus X$ as in \cref{prop:ImmersionGivesEdgeCut} in $\mathcal{O}_t(|E(G)|^3)$ time by applying \cref{thm:makingImmersionOdd} and the algorithmic version of \cref{cor:OddCircuitsEP}. This means we can find the set of edges $F'$ in \cref{prop:GrowingBipartite} if it exists in $\mathcal{O}_t(|V(G)|^4 + |E(G)|^3)$ time as follows. We first decompose $G$ into its edge blocks and add all bridges of $G$ to $F'$. We do the following for each edge block $G'$. We find a $K_{98{\small,}000t + 4{\small,}410{\small,}071}$-immersion in $G' \setminus F$ if it exists via \cref{cor:FindingImmersionAlg} in $\mathcal{O}_t(|V(G')|^4 + |E(G')|)$ time. We then apply \cref{lem:MakingTotallyEven} and the algorithmic version of \cref{prop:ImmersionGivesEdgeCut} to find our edge set $F'$ in $\mathcal{O}_t(|E(G')|^3)$ time.
    
    By applying \cref{prop:GrowingBipartite} at most $|E(G)|$ times, we obtain the desired decomposition in $\mathcal{O}_t(|V(G)|^4|E(G)| + |E(G)|^4)$ time.
\end{proof}

\noindent
\textbf{Acknowledgments.}
The author would like to thank Rose McCarty and Paul Wollan for their helpful discussions.

\bibliographystyle{abbrv}
\bibliography{refs}
\end{document}